\newcommand{\cdummy}{\cdot}
\newcommand{\tmop}[1]{\ensuremath{\operatorname{#1}}}
\definecolor{grey}{rgb}{0.75,0.75,0.75}
\definecolor{orange}{rgb}{1.0,0.5,0.5}
\definecolor{brown}{rgb}{0.5,0.25,0.0}
\definecolor{pink}{rgb}{1.0,0.5,0.5}
\numberwithin{equation}{section}
\newtheorem{theorem}{Theorem}[section]
\newtheorem{lemma}[theorem]{Lemma}
\theoremstyle{definition}
\newtheorem{definition}{Definition}[section]
\theoremstyle{remark}
\newcommand{\bke}[1]{\left ( #1 \right )}
\newcommand{\norm}[1]{ \| #1  \|}
\newcommand{\abs}[1]{\left | #1 \right |}
\def\de{\delta}
\def\e {\varepsilon}
\def\th{\theta}
\def\si{\sigma}
\def\ph{\varphi} %
\def\om{\omega}
\def\De{\Delta}
\def\La{\Lambda}
\def\Om{\Omega}
\newcommand{\R}{\mathbb{R}}
\renewcommand{\div}{\mathop{\rm div}}
\newcommand{\curl} {\mathop{\rm curl}}
\newcommand{\pd}{\partial}
\newcommand{\nb}{\nabla}
\newcommand{\td}{\tilde}
\renewcommand{\bar}[1]{\overline{#1}}
\newcommand{\lec}{{\ \lesssim \ }}
\newcommand{\cD}{\mathcal{D}}
\newcommand{\cX}{\mathcal{X}}
\newcommand{\I}{\infty}
\renewcommand{\[}{\begin{equation}}
\renewcommand{\]}{\end{equation}}
\newcommand{\donothing}[1]{}
\begin{document}

\title{Regularity criteria in weak $L^3$ for 3D incompressible
  Navier-Stokes equations}

\author{Yuwen Luo \and Tai-Peng Tsai}

\date{}%
 \maketitle

\begin{abstract}
We study the regularity of a distributional solution $(u,p)$ of the 3D
incompressible evolution Navier-Stokes equations. 
Let $B_r$ denote concentric balls in $\R^3$ with radius $r$. 
We will show that if $p\in L^{m} (0,1; L^1(B_2))$, $m>2$, and if
$u$ is sufficiently
small in $L^{\infty} (0,1; L^{3,\infty}(B_2))$, without any assumption on its gradient, then $u$ is bounded
  in $B_1\times (\frac{1}{10},1)$.  It is an endpoint case of the usual
  Serrin-type regularity criteria, and extends the steady-state result
  of Kim-Kozono to the time dependent setting. In the appendix we also show
some nonendpoint borderline regularity criteria.

{\bf Keywords}. Navier-Stokes equations, regularity criteria, distributional solution,
 weak $L^3$.

{\bf  Mathematics Subject Classification 2010}. 35Q30; 35B10; 35B40.
 \end{abstract}

\section{Introduction}

This paper is concerned with the regularity of a distributional
solution $(u,p)$ of the 3D incompressible Navier-Stokes equations
\begin{equation}
\label{NS}
\pd_t u - \De u + (u \cdot \nb) u  + \nb p =0, \quad
\div u = 0.
\end{equation}
Denote $B_{r}=\{x\in \R^{3}: |x|<r\}$. 
Our goal is to prove interior regularity (i.e.~boundedness) of $u$
assuming that  $p\in L^{m} (0,1; L^1(B_2))$, $m>2$, and that
$u$ is sufficiently
small in $L^{\infty} (0,1; L^{3,\infty}(B_2))$.

\begin{definition}
A pair  $u \in L^{2} (  B_2\times (0, 1);\R^3 )$ and $ p \in L^{1}(B_2\times (0, 1))$
is a {\it distributional solution} of \eqref{NS} in $B_2 \times (0,1)$ if 
\begin{equation}
\label{dis.sol.1}
\int _0^1 \int_{B_2} \bke{ u\cdot (- \pd_t \zeta  - \De \zeta) - \sum_{i,j}u_i u_j \pd_i \zeta_j
- p \sum_{i}\pd_i \zeta_i}\,dx \,dt=0
\end{equation}
for any $\zeta \in C^2_c(B_2 \times (0,1);\R^3)$, and
\begin{equation}
\label{dis.sol.2}
 \int_{B_2}  u(x,t)\cdot \nb \phi(x) \,dx=0, \quad \forall \phi\in C^1_c(B_2) ,
\end{equation}
for almost every $t \in (0,1)$.
\end{definition}

\begin{definition}
A vector field  $u \in L^{2} (  B_2\times (0, 1);\R^3 )$ 
is a {\it very weak solution} of \eqref{NS} in $B_2 \times (0,1)$ if it satisfies
\eqref{dis.sol.2}, and also \eqref{dis.sol.1} 
for any $\zeta \in C^2_c(B_2 \times (0,1);\R^3)$ with $\div \zeta =0$, so that the last term in \eqref{dis.sol.1} involving $p$ is absent. A {\it weak solution} $u$ is a very weak solution which further satisfies
$u \in L^\I(0,1;L^2(B_2))\cap  L^2(0,1;H^1(B_2))$.
\end{definition}

Note that the definitions do not involve any boundary or initial conditions. 
Also note that the second definition does not 
explicitly involve the pressure $p$. A distributional solution is necessarily a 
 very weak solution.

Recall that it is an open problem whether the initial value problem or
initial-boundary value problem of the Navier-Stokes system \eqref{NS}
has a global classical solution for smooth and localized initial data
(with zero boundary condition).  An important regularity criterion due to Serrin
{\cite{Serrin62}} states that if a weak solution $u$ satisfies the
condition
\begin{equation} u \in L^s \left( 0, T ; L^q \right) \quad\tmop{with} \quad 3 < q <
   \infty , \quad\frac{2}{s} + \frac{3}{q} < 1, \end{equation}
then $u$ is locally bounded. 
 The borderline cases $3/p + 2/q = 1$, $3 < p \le \I$, were proved by Ladyzhenskaya
\cite{Lady1967}, 
 Sohr \cite{Sohr83}, Giga \cite{Giga86}, and 
Struwe \cite{Struwe88} under various settings.
For the end point case $(q, s) = (3,\I)$, i.e., $u \in L^\I(0,T;L^3)$, partial results are available in 
\cite{MR1078355, Struwe88, vonWahl, KozonoSohr}, and the full case in $\R^3$ is resolved by
Escauriaza,  Seregin and   {\v{S}}ver{\'a}k \cite{ESS}. 
See \cite{GKT072} and its references for various regularity criteria in terms of scaled norms.

Attempts were made to replace Lebesgue spaces by Lorentz spaces $L^{q,r}$ in these regularity criteria. Recall $L^{q,\I}$ is the weak $L^q$ space.
Takahashi \cite{MR1078355} showed regularity of weak solutions assuming $\|u\|_{L^{s}L^{q,\infty}}$, $3<q\le \I$, is small enough. Chen and Price \cite{MR1874409} showed regularity at $(x_0,t_0)$ assuming $\sup_{B_r(x_0) \times (t_0-r^2,t_0)}|u(x,t)||x-x_0|^{1-\th}|t-t_0|^{\th/2}$ is sufficiently small for some $0<\th<1$ and $r>0$.   Sohr \cite{MR1877269} assumes $u\in L^{s,r}(0,T;L^{q,\infty})$  with $ 3<q<\infty$, $\frac{3}{q}+\frac{2}{s}=1$.
Kim and Kozono
\cite{KK04} proves interior regularity assuming  smallness of $\norm{u}_{ L^{s,\I}(0,T;L^{q,\infty})}$  with $ 3\le q<\infty$, $\frac{3}{q}+\frac{2}{s}=1$. It includes the end point $(q,s)=(3,\I)$.

All above-mentioned regularity criteria are for weak solutions with $u
\in L^\I(0,T;L^2)\cap L^2(0,T;H^1)$.  For {\it {distributional
    solutions}} assuming no gradient bound, the only known regularity results are for steady
states. The first type of results is the removability of singularity by
Dyer and Edmunds \cite{Dyer-Edmunds},  Shapiro \cite{MR0380158} and  Choe and Kim 
\cite{Choe-Kim}. In the most recent work \cite{Choe-Kim}, it is showed that if $(u,p)$ is a distributional solution in $B_1 \backslash \{ 0 \}$ and either $u(x)=o(|x|^{-1})$ as $x\to 0$  or  $u\in L^{3}(B_1)$, then $(u,p)$ is  a distributional solution in $B_1$.  
The second type of results is the regularity for distributional solutions $(u,p)$  in $B_1$. 
It is known in  \cite{Dyer-Edmunds, MR0380158,Choe-Kim} that $u$ is regular if $u \in L^{\beta}(B_1)$, $\beta>3$. Kim and Kozono \cite{KK06} 
shows the regularity assuming $(u,p)\in L^{3}_{loc}\times
L^{1}_{loc}$ or $(u,p)\in L^{3,\I}\times L^{1}_{loc}$ with $\norm{u}_{
  L^{3,\I}}$ sufficiently small. Also see Miura and Tsai \cite{MT12} which characterizes the asymptotes of a {\it very weak solution} $u$ in $B_1 \backslash \{ 0 \}$ with $\norm{|x| u(x)}_{L^\I}$ sufficiently small.

The main purpose of this article is to obtain a regularity criterion for distributional solutions of the time-dependent Navier-Stokes equations in the borderline class $L^{\infty}(0,1;L^{3,\infty}(B_2))$.
Our main result is the following.

\begin{theorem}
\label{thm1} There is a small constant $\e_1>0$ such that the following holds.
Suppose the pair $(u,p)$ is a distributional solution of the Navier-Stokes system \eqref{NS} in $B_2 \times (0,1)$, with $p \in L^m(0,1;
L^{1}(B_2))$ for some $m>2$, and 
\begin{equation}
\e = \norm{u}_{L^\I(0,1; L^{3,\I}(B_2))} \le \e_1.
\end{equation}
Then $u \in L^{\infty} \left( 
B_1 \times (\frac{1}{10}, 1) \right)$.
\end{theorem}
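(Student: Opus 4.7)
My plan is to (i) decompose the pressure locally into a Calder\'on–Zygmund part driven by $u\otimes u$ and a spatially harmonic remainder, (ii) use space-time cutoffs together with a mollification to convert the distributional equation into a Duhamel integral identity on $\R^3$, and (iii) close a small-data contraction in Lorentz-type spaces and then bootstrap to boundedness via a subcritical Serrin class.

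\textbf{Pressure decomposition.} Fix $\phi\in C^\I_c(B_{7/4})$ with $\phi=1$ on $B_{5/3}$ and set
\[
p_1 = \sum_{i,j} R_i R_j(\phi^2 u_i u_j), \qquad p_2 = p - p_1,
\]
where $R_i$ are the Riesz transforms on $\R^3$. Because $u\otimes u \in L^\I(0,1;L^{3/2,\I}(B_2))$ with norm $\lec \ve^2$, boundedness of double Riesz transforms on $L^{3/2,\I}$ yields $\norm{p_1}_{L^\I(0,1;L^{3/2,\I}(\R^3))} \lec \ve^2$. In $B_{5/3}$ the remainder $p_2$ is harmonic in $x$ for a.e.\ $t$, and interior harmonic estimates combined with $p\in L^m(0,1;L^1(B_2))$ give pointwise-in-$x$ bounds on $p_2$ and all its spatial derivatives on slightly smaller balls, with $L^m$ time integrability.

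\textbf{Integral equation via cutoffs.} Choose $\chi\in C^\I_c(B_{3/2})$ equal to $1$ on $B_{4/3}$ and $\eta\in C^\I_c((1/30,1))$ equal to $1$ on $(1/10,1)$, and set $v=\chi\eta\, u$. Mollifying $u$ in space, using $\div u = 0$ to rewrite $(u\cdot \nb)u = \nb\cdot(u\ot u)$, and passing to the limit should give a Duhamel representation on $\R^3$,
\[
v(t) = \int_0^t e^{(t-s)\De}\bke{-\chi\eta\,\nb\cdot(u\ot u) \;-\; \chi\eta\,\nb(p_1+p_2) \;+\; G}(s)\, ds,
\]
where $G$ collects cutoff-error terms such as $\pd_t(\chi\eta)\,u$, $\De(\chi\eta)\,u$, and $\nb(\chi\eta)\cdot\nb u$—the last rewritten without an explicit $\nb u$ by shifting derivatives onto the heat kernel under the convolution. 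All algebra has to be carried out at the mollified level and then passed to a limit, which is subtle because $L^{3,\I}$ is not separable.

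\textbf{Fixed point and bootstrap.} The bilinear operator $B(f,g)(t) = \int_0^t e^{(t-s)\De}\,\mathbb{P}\,\nb\cdot(\chi\eta\, f\ot g)(s)\,ds$ satisfies, by Lorentz-space estimates for the Oseen tensor, $\norm{B(f,g)}_{L^\I(0,1;L^{3,\I}(\R^3))} \lec \norm{f}_{L^\I L^{3,\I}}\norm{g}_{L^\I L^{3,\I}}$, while the linear source terms (driven by $p_2$ and $G$) lie in a strictly subcritical parabolic class $L^s(0,1;L^q(B_{3/2}))$ with $q>3$ and $\tfrac{2}{s}+\tfrac{3}{q}<1$. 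For $\ve_1$ sufficiently small, a standard contraction in a small $L^\I L^{3,\I}$-ball around the subcritical ``linear'' solution produces a unique mild solution $\td v$; smallness also forces $v=\td v$ by comparison in the same ball. Since $\td v$ then sits in a subcritical Serrin class, classical interior regularity upgrades $u$ to $L^\I(B_1\times (\tfrac{1}{10},1))$.

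The principal obstacle is the integral-equation step: translating the purely distributional identity \eqref{dis.sol.1} into a genuine Duhamel formula when no $\nb u$ bound is available. I expect the bulk of the technical work to lie in justifying the mollification/limit procedure in non-separable Lorentz spaces — keeping the cutoffs compatible with the pressure decomposition throughout — so that the bilinear estimate and the small-data contraction can be legitimately invoked.
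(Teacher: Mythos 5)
Your overall strategy runs parallel to the paper's (localize, convert the distributional equation into a Duhamel-type identity, small-data fixed point plus uniqueness, then bootstrap through a subcritical Serrin criterion), but two decisive steps are unjustified as written. First, the claimed estimate $\norm{B(f,g)}_{L^\I L^{3,\I}}\lec \norm{f}_{L^\I L^{3,\I}}\norm{g}_{L^\I L^{3,\I}}$ is not a routine consequence of ``Lorentz-space estimates for the Oseen tensor'': with both factors only critical one has $f\otimes g\in L^\I(0,1;L^{3/2,\I})$, and the operator norm of $\nb e^{(t-s)\De}\mathbb{P}$ from $L^{3/2,\I}$ to $L^{3,\I}$ is of order $(t-s)^{-1}$, so the naive bound $\int_0^t (t-s)^{-1}\,ds$ diverges; the Duhamel integral does not converge absolutely at this endpoint, which is precisely the whole difficulty of the case $s=\I$. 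This can be rescued (Meyer's bilinear estimate in $L^{3,\I}$, or, as the paper does, defining the operator $\Phi$ only by duality against $L^{3/2,1}_\si$ test fields using Yamazaki's estimate \eqref{est2}, which tolerates the time exponent $\I$), but then your step (ii) must be redone so that the \emph{original} localized solution is shown to satisfy the identity with the operator understood in this weak/duality sense (this is the content of Lemma \ref{th3.2}); a mollification argument that presupposes an absolutely convergent convolution cannot produce that identity. Relatedly, your cutoff $v=\chi\eta\,u$ is not divergence-free, so pairing with $\mathbb{P}$, or with solenoidal test fields in the duality formulation, is not available without a corrector; the paper adds the gradient term $\nb\eta$ exactly to restore solenoidality and hides $\pd_t\eta$ in the modified pressure.

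Second, the upgrade to a subcritical Serrin class does not follow from ``contraction in a small $L^\I L^{3,\I}$-ball around the subcritical linear solution'': closeness in the critical norm confers no higher integrability, and moreover the source driven by $p_1=R_iR_j(\phi^2u_iu_j)$ (equivalently the $\mathbb{P}$-part of the quadratic term) is itself only critical, so your ``linear'' solution is not subcritical either. What is needed, and what the paper does, is to write the unknown-dependent term as \emph{linear} in $v$ with the small known coefficient $\td\ph u$, and to run the contraction in the intersection $\cX^3\cap\cX^{3+\de}$, verifying that the pressure and cutoff sources give $v^0\in\cX^{3+\de}$ (this is where $m>2$ enters, via \eqref{better.p}) and that $\Phi(\td\ph u\otimes\cdot)$ maps $\cX^{3+\de}$ into itself by H\"older in Lorentz spaces together with the duality estimate; the fixed point then inherits the subcritical integrability and Theorem \ref{thm6.1} applies. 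The same linear-in-$v$ structure is also what makes the uniqueness/comparison step close: with a genuinely bilinear $B(v,v)$ and a linear solution whose critical norm is controlled only by $\norm{p}_{L^m L^1}$ (not small), the difference estimate does not contract, whereas in the linearized form it closes under $C\e<1$ alone, independently of the size of the pressure. With these two repairs your outline essentially becomes the paper's argument.
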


{\it Comments on Theorem \ref{thm1}}:
\begin{enumerate}
\item It is a borderline case of Serrin-type regularity criteria with
$s=\infty$ and $q=(3,\infty)$. 

\item We assume $p \in L^m(0,1; L^{1}(B_2))$, but we do not need it to be small.
Moreover, the small constant $\e_1$ is independent of $m$ and $\norm{p}_{L^m(0,1; L^{1}(B_2))}$. In addition, we make no assumption on the gradient of $u$.

\item The similar result in Kim-Kozono \cite{KK04} does not assume any thing on the pressure, but requires that $u$ is a weak solution, 
$u \in L^\I(0,1;L^2(B_2))\cap  L^2(0,1;H^1(B_2))$.

\item Our proof makes use of a subcritical interior regularity criterion for very weak solutions $u \in L^s(0,1;L^q(B_2))$, $3/q+2/s<1$, see Theorem \ref{thm6.1} in Appendix. It does not need any assumption on $\nb u$ or $p$.

\end{enumerate}

The main idea of its proof is as follows:
we first perform a cut-off and reformulate
the problem on the entire space $\R^3$. We next show the
existence of a more regular solution of the reformulated problem.  We finally show that the original solution must locally agree
with the newly constructed regular solution. Both existence are uniqueness are based on the linear estimate \eqref{est2} of Yamazaki \cite{Yamazaki}, which allows the time exponent to be $\I$.

Theorem \ref{thm1} can be considered an extension of the steady-state
result of Kim-Kozono \cite[Theorem 4]{KK06} to the time-dependent setting. We reformulate an important 3D case of \cite{KK06} below.

\begin{theorem}[Kim-Kozono]
\label{thm2}
There is a small constant $\e_2 >0$ with the following property.
Let $\Om$ be any open set in $\R^3$.
If $(u,p) \in  L^2_{loc}(\Om;\R^3) \times  L^1_{loc}(\Om)$ is a distributional solution
of the stationary Navier-Stokes equations \eqref{NS} in $\Om$ with zero force, and
if $u$ satisfies $\norm{u}_{L^{3,\I}(\Om)}\le \e_2$, then $u\in L^\I_{loc}(\Om)$.
\end{theorem}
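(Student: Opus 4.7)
The plan is to mimic the strategy outlined for Theorem~\ref{thm1} but in the simpler stationary setting: cut off the solution, reformulate the problem on all of $\R^3$, construct a regular solution of the reformulated problem by a fixed-point argument in $L^{3,\I}$, and identify the original solution with the constructed one via a uniqueness statement in the class of small $L^{3,\I}$-solutions. After translation and dilation, it suffices to assume $B_4\subset \Om$ and prove $u\in L^\I(B_{1/2})$.

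The first step is to build a divergence-free cutoff. Choose $\chi\in C^\I_c(B_2)$ with $\chi\equiv 1$ on $B_1$ and set $v=\chi u - \mathcal B[u\cdot\nb\chi]$, where $\mathcal B$ is the Bogovskii operator on a suitable annulus so that $\div v=0$ on $\R^3$. Then $v\in L^{3,\I}(\R^3)$ with norm $\le C\e_2$, and a direct computation shows that $v$ satisfies
\[
-\De v + \div(v\ot v)+\nb \pi = F,\qquad \div v=0\text{ on }\R^3,
\]
in the distributional sense, where $F$ is supported in the annulus $B_2\setminus B_1$ and depends on $u$ and $\chi$ (via $\nb \chi$, $\De\chi$ and the Bogovskii correction). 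The key point is that all the bad ``low regularity'' terms in $F$ are multiplied by derivatives of $\chi$, so on the cut-off region they are controlled by $\|u\|_{L^{3,\I}(B_2)}^2$ (via pointwise multiplication into lower Lorentz/Morrey exponents, where the pressure does not appear in the very-weak formulation).

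The second step is to construct, in a small ball of $L^{3,\I}(\R^3)$, a fixed point $V$ of the nonlinear map $T(w)=S[F]-S[\div(w\ot w)]$, where $S$ is the solution operator for the stationary Stokes system on $\R^3$. The essential linear estimate is $\|S[\div G]\|_{L^{3,\I}} \lec \|G\|_{L^{3/2,\I}}$ (the stationary analogue of Yamazaki's estimate~\eqref{est2}, obtainable from Riesz-potential bounds in Lorentz spaces), which combined with $\|w\ot w\|_{L^{3/2,\I}}\lec \|w\|_{L^{3,\I}}^2$ makes $T$ a contraction on $\{\|w\|_{L^{3,\I}}\le 2C\e_2\}$ when $\e_2$ is small. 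Bootstrapping: since $F$ has compact support it lies in every $L^q_{loc}$, so $S[F]\in L^\I_{loc}\cap L^{3,\I}$; iterating on $V=S[F]-S[\div(V\ot V)]$, where $V\ot V\in L^{3/2,\I}$ and then progressively in better spaces as local regularity improves, yields $V\in L^\I_{loc}(\R^3)$.

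The third and crucial step is to show $v=V$. Their difference $w=v-V\in L^{3,\I}(\R^3)$ is divergence-free and satisfies the linear Stokes system $-\De w+\nb q = -\div(w\ot v + V\ot w)$ distributionally on $\R^3$. Applying the same linear estimate gives
\[
\|w\|_{L^{3,\I}}\le C\bigl(\|v\|_{L^{3,\I}}+\|V\|_{L^{3,\I}}\bigr)\|w\|_{L^{3,\I}}\le C\e_2\|w\|_{L^{3,\I}},
\]
which for $\e_2$ small forces $w\equiv 0$. This identification step is the main obstacle: because $u$ has no a priori gradient bound, one must phrase ``$v$ solves the equation on $\R^3$'' entirely at the very-weak level, and one needs a Liouville-type statement ensuring that the unique $L^{3,\I}(\R^3)$ solution of the linearised Stokes system with given right-hand side is the one produced by $S$ (which follows because any $L^{3,\I}$ biharmonic divergence-free field on $\R^3$ vanishes). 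Once $v=V$, we have $u=v=V$ on $B_1$, and the bootstrapped regularity of $V$ gives $u\in L^\I(B_{1/2})$, finishing the proof.
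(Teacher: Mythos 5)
Your proposal goes a genuinely different route from the paper: the paper proves Theorem \ref{thm2} in a few lines by scaling, setting $v(x,t)=Ru(x_0+Rx)$, $\pi(x,t)=R^2p(x_0+Rx)$ for $B(x_0,2R)\subset\Om$, observing that this is a time-independent distributional solution of the evolution system on $B_2\times(0,1)$ with $\pi\in L^\I(0,1;L^1(B_2))$, and invoking Theorem \ref{thm1}. You instead re-prove the stationary result from scratch, and as written the argument has a real gap in how the pressure is handled. When you cut off a \emph{distributional} solution, the pressure does not disappear from the very-weak formulation: testing the localized equation with a divergence-free $\zeta$ forces you to use $\chi\zeta$ as a test function for $u$, and $\div(\chi\zeta)=\nb\chi\cdot\zeta\neq0$, so the source $F$ contains a term $p\,\nb\chi$ supported in the annulus. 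This term is only controlled by $\norm{p}_{L^1(B_2)}$, which is \emph{not} small and does not become small under scaling. Consequently $\norm{S[F]}_{L^{3,\I}}$ is of size $\e_2+\norm{p}_{L^1}$, and your fully nonlinear map $T(w)=S[F]-S[\div(w\ot w)]$ cannot be a self-map of the ball $\{\norm{w}_{L^{3,\I}}\le 2C\e_2\}$; the construction of $V$, and with it the smallness of $\norm{V}_{L^{3,\I}}$ that your uniqueness step $\norm{w}\le C(\norm{v}+\norm{V})\norm{w}$ relies on, collapses. The fix is exactly the paper's device in the proof of Theorem \ref{thm1}: treat the quadratic term as $\td\ph u\ot v$, i.e.\ \emph{linear} in the unknown with small coefficient $u$, run the fixed point in a ball of radius proportional to $\e_2+\norm{p}_{L^1(B_2)}$, and use smallness of $\e_2$ only in the contraction constant and in the uniqueness step (where the difference satisfies $w=-\Phi(\td\ph u\ot w)$).

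A second, smaller gap is the bootstrap: the assertion that ``since $F$ has compact support it lies in every $L^q_{loc}$'' is false --- compact support does not improve integrability, and $F$ is only as good as $u\ot u$ and $p$ allow (roughly $L^{3/2,\I}+L^1$ on the annulus). What is true is that $S[F]$ is smooth on $B_{1/2}$ because the source vanishes there, but the iteration from a fixed point in $L^{3,\I}$ to $V\in L^\I_{loc}$ still needs an argument at the critical exponent; the paper handles this by first gaining an $L^{3+\de,\I}$ improvement (Lemma \ref{th3.1}) and then invoking the subcritical criterion Theorem \ref{thm6.1}. Your Liouville-type identification of $S$-solutions (a divergence-free $L^{3,\I}$ solution of the homogeneous Stokes system on $\R^3$ is biharmonic, hence zero) is fine, and the divergence-free Bogovskii cutoff is a legitimate alternative to the paper's potential correction $\nb\eta$; but without restructuring the fixed point as a linear problem with non-small data, the proof does not close.
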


We will show that Theorem \ref{thm2} is a corollary of Theorem \ref{thm1}.
It is worthy noting that our proof is different from that of
\cite{KK06}, and hence is a second (although not simpler) proof of Theorem \ref{thm2}.

Finally we present a modest improvement of Theorem \ref{thm2}.

\begin{theorem}
\label{thm3}
There is a small constant $\e_3 >0$ with the following property.
Let $\Om$ be any open set in $\R^3$.
If $u \in  L^2_{loc}(\Om;\R^3)$ is a very weak solution
of the stationary Navier-Stokes equations \eqref{NS} in $\Om$ with zero force, and
if $u$ satisfies $\norm{u}_{L^{3,\I}(\Om)}\le \e_3$, then $u\in L^\I_{loc}(\Om)$
and
\begin{equation}
\label{eq1.7}
\norm{u}_{L^\I(B(x_0,R))} \le \frac CR\, \norm{u}_{L^{3,\I}(\Om)}
\end{equation}
for any ball $B(x_0,2R)\subset \Om$, for a constant $C$ independent of $u$ and $R$.
\end{theorem}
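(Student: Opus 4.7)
The plan is to deduce Theorem~\ref{thm3} from Theorem~\ref{thm1}. Two gaps separate the statements: Theorem~\ref{thm3} assumes only that $u$ is a \emph{very weak} (pressureless) solution, and the conclusion requires the quantitative scaled estimate~\eqref{eq1.7}. The first gap will be bridged by constructing a local pressure for $u$; the second, by regarding the stationary solution as time-independent in Theorem~\ref{thm1} and rescaling. Concretely, fix $B(x_0,2R)\subset\Omega$ and an arbitrary point $y_0\in B(x_0,R)$. Since $B(y_0,R)\subset B(x_0,2R)\subset\Omega$, the rescaling
\[
v(y) := \tfrac{R}{2}\,u\bigl(y_0+\tfrac{R}{2}\,y\bigr),\qquad y\in B_2,
\]
is a stationary very weak solution on $B_2$ with $\|v\|_{L^{3,\infty}(B_2)}\le\|u\|_{L^{3,\infty}(\Omega)}\le\varepsilon_3$.

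Next I would construct a pressure $p$ on $B_{3/2}$ so that $(v,p)$ satisfies the stationary version of~\eqref{dis.sol.1} there. Pick $\chi\in C_c^\infty(B_2)$ with $\chi\equiv 1$ on $B_{7/4}$ and define
\[
P := \sum_{i,j} R_i R_j(\chi^2 v_i v_j),
\]
where $R_i$ are Riesz transforms on $\R^3$ and $\chi^2 v\otimes v$ is extended by zero. H\"older in Lorentz spaces and the $L^{p,q}$-boundedness of the Riesz transforms give $\|P\|_{L^{3/2,\infty}(\R^3)}\le C\|v\|_{L^{3,\infty}(B_2)}^2$, while on $B_{7/4}$ one has $-\Delta P=\partial_i\partial_j(v_iv_j)$. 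By de Rham's theorem applied to the very weak identity, there is a pressure distribution $q$ on $B_{7/4}$ with $\nabla q=\Delta v-\mathrm{div}(v\otimes v)$; since $q$ satisfies the same Poisson equation as $P$, the difference $h:=q-P$ is harmonic, hence smooth, on $B_{7/4}$. Thus $p:=P+h=q$ is a valid pressure, and a quantitative estimate for the harmonic correction (via interior elliptic estimates after fixing the additive constant, or via a Lorentz-space version of Ne\v{c}as' inequality) yields $\|p\|_{L^1(B_{3/2})}\le C\|v\|_{L^{3,\infty}(B_2)}^2$.

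Regarded as constant in time, $(v,p)$ is a distributional solution of~\eqref{NS} on $B_{3/2}\times(0,1)$, and $p\in L^m(0,1;L^1(B_{3/2}))$ for every $m$. A further rescaling $\tilde v(y):=\tfrac{3}{4}v(\tfrac{3}{4}y)$, $\tilde p(y):=(\tfrac{3}{4})^2 p(\tfrac{3}{4}y)$ places the problem on $B_2\times(0,1)$, where Theorem~\ref{thm1} applies provided $\varepsilon_3$ is small enough. Since its proof is perturbative around Yamazaki's linear estimate, it should yield a quantitative bound
\[
\|\tilde v\|_{L^\infty(B_1\times(1/10,1))}\le C\bigl(\|\tilde v\|_{L^\infty L^{3,\infty}(B_2)}+\|\tilde p\|_{L^m L^1(B_2)}\bigr)\le C'\|v\|_{L^{3,\infty}(B_2)},
\]
using $\|\tilde p\|_{L^1}\lesssim\|v\|_{L^{3,\infty}}^2\le\varepsilon_3\|v\|_{L^{3,\infty}}$. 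Undoing the two rescalings then gives $\|u\|_{L^\infty(B(y_0,3R/8))}\le(C''/R)\|u\|_{L^{3,\infty}(\Omega)}$; covering $B(x_0,R)$ by balls $B(y_0,3R/8)$ as $y_0$ varies in $B(x_0,R)$ completes~\eqref{eq1.7}.

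The main obstacle is the pressure construction above: producing $p\in L^1_{loc}$ for a merely very weak solution (precisely the distinction from Theorem~\ref{thm2}, which takes $p$ as input) and controlling the harmonic correction $h$ in a quantitatively sharp way. The remaining ingredients---treating stationary data as time-independent, rescaling to $B_2$, and covering---are routine; extracting the quantitative form of Theorem~\ref{thm1} requires only careful bookkeeping in its perturbative existence and uniqueness argument.
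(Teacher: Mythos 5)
Your overall strategy---recover a local pressure for the very weak solution, feed the time-independent pair into Theorem \ref{thm1} after rescaling, and extract the quantitative bound from the perturbative fixed-point argument---does reach the conclusion, but it takes a genuinely different route from the paper at the key step. The paper does not build the pressure via Riesz transforms and de Rham: after the same rescaling it views the equation as a Stokes system with right side $\pd_j(-v_iv_j)$ and invokes the {\v{S}}ver{\'a}k--Tsai interior estimate (Lemma \ref{th5.1}), which in one stroke gives $v\in W^{1,q}(B_{3/2})$ and a pressure $p\in L^q(B_{3/2})$ with $\norm{\nb v}_{L^q(B_{3/2})}+\inf_a\norm{p-a}_{L^q(B_{3/2})}\le C\e$, with no pressure input; it then applies Theorem \ref{thm2} and reads the quantitative bound off the ball $Y_1$ in \eqref{eq3.30} followed by the usual bootstrap. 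Your de Rham $+$ $R_iR_j$ $+$ harmonic-correction construction accomplishes the same qualitative task (and avoids citing \cite{ST00}), and your ``careful bookkeeping'' extraction of an $L^\I$ bound of size $C_*$ from the proof of Theorem \ref{thm1} is exactly what the paper itself does via \eqref{eq3.30} and Theorem \ref{thm6.1}, so that step is acceptable though it should be spelled out.

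One claimed estimate is, however, false (though harmlessly so): $\norm{p}_{L^1(B_{3/2})}\le C\norm{v}_{L^{3,\I}(B_2)}^2$ cannot hold. The harmonic correction $h$ is constrained by $\nb q=\De v-\div(v\ot v)$, whose \emph{linear} term $\De v$ does not drop out when you pass from $\De q$ to $q$. Concretely, $v=\e(x_2^2/2,0,0)$, $p=\e x_1$ is a smooth stationary solution of \eqref{NS} on $B_2$ with $\norm{v}_{L^{3,\I}(B_2)}\sim\e$, yet every admissible pressure satisfies $\inf_a\norm{p-a}_{L^1(B_{3/2})}\gtrsim\e$, not $\lesssim\e^2$. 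What is true---and all you need, since \eqref{eq1.7} is linear in $\norm{u}_{L^{3,\I}}$---is the linear bound: pairing $\nb h=\De v-\div(v\ot v)-\nb P$ with translates of a fixed mollifier and using the mean value property of the harmonic function $\nb h$ gives $\norm{\nb h}_{L^\I(B_{13/8})}\lesssim\e+\e^2$, and after fixing the additive constant, $\norm{p}_{L^1(B_{3/2})}\lesssim\e$; your final chain then still closes, since a pressure contribution of size $C\e\sim C\norm{v}_{L^{3,\I}}$ is just as good as $\e\norm{v}_{L^{3,\I}}$ for the linear estimate. With that correction (and the quantitative version of Theorem \ref{thm1} written out), your argument is a valid alternative to the paper's use of Lemma \ref{th5.1}, at the cost of handling the harmonic part by hand rather than getting the pressure bound prepackaged.
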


The improvement is the explicit estimate \eqref{eq1.7}, 
the absence of any assumption on the pressure, and that
\eqref{dis.sol.1} is satisfied only for those test functions $\zeta$ with $\div \zeta=0$. It is based on an interior estimate without pressure assumption, due to
 {\v{S}}ver{\'a}k and Tsai \cite{ST00}, see Lemma \ref{th5.1}.
Note that a time-dependent version of Lemma \ref{th5.1} 
appears in \cite[Appendix]{CSTY08}, which however cannot be used to replace the distributional solution assumption in Theorem \ref{thm1} by very weak solution, since the exponent of  time integration in  \cite[Appendix]{CSTY08} has to be finite and cannot be $\I$.

The rest of the paper is structured as follows: In Sect.~\ref{sec2} we give a few 
results for the Stokes system. In Sect.~\ref{sec3} we prove Theorem \ref{thm1}.  In Sect.~\ref{sec4} we show Theorem \ref{thm2}.  In Sect.~\ref{sec5} we show Theorem \ref{thm3}. In the appendix we 
show a subcritical regularity criterion for very weak solutions which is used in the proof of 
Theorem \ref{thm1}, and also some borderline regularity criteria which are nonendpoint  analogue of Theorem \ref{thm1}.

\section{Preliminaries}
\label{sec2}

In this section we collect a few preliminary results. For $1\le p\le \I$ we denote by $p'$ its conjugate exponent, $\frac1{p'}=1-\frac1{p}$.

\subsection{Oseen's tensor}
We recall the fundamental solution of the Stokes system in $\R^3$ (the
Oseen's tensor, see \cite{Oseen} and \cite[page 235]{Solonnikov})
\begin{equation}
\label{Oseen-tensor}
S_{ij}(x,t)=\Gamma(x,t)\delta_{ij}+\frac{\partial^2}{\partial
x_i\partial x_j}\int_{\R^3}\frac{\Gamma(y,t)}{4\pi\abs{x-y}}dy,%
\end{equation}
where $\Gamma(x,t)=(4\pi t)^{-3/2}\exp(-|x|^2/4t)$ is the fundamental solution of the heat equation. It
is known in \cite[Theorem 1]{Solonnikov} that the tensor
$S=(S_{ij})$ satisfies the following estimates:
\begin{equation}\label{estimate-T}
\abs{D^\ell_x\pd^k_t S(x,t)}\leq C_{k,l}
(|x|+\sqrt{t})^{-3-\ell-2k}, \quad (\ell,k \ge 0),
\end{equation}
where $D^\ell_x$ indicates $\ell$-th order derivatives with respect
to the variable $x$.

A solution of the non-stationary Stokes system in $\R^{3}\times \R_+$,
\begin{equation}\label{Stokes}
\pd_t w - \De w+ \nb p =f + \nb\cdot F, \quad \div w =0, 
\end{equation}
with zero initial condition,
if $f=(f_j)$ and $F=(F_{jk})$ have sufficient decay, is given by
\begin{align}
w_i(x,t)&=\int_0^{t}\int_{\R^3}S_{ij}(x-y,s)
f_{j}(y,t-s)dyds \nonumber\\
 &\qquad -\int_0^{t}\int_{\R^3}(\partial_{k}S_{ij}(x-y,s))
F_{jk}(y,t-s)dyds. \label{Oseen-formula}
\end{align}
Here we have taken the convention $(\nb \cdot F)_i = \sum_{j} \pd_j F_{ij}$.

\subsection{Stokes flow in Lorentz spaces}
Let $L^{q,r}$ denote the usual Lorentz space for $q,r\in [1, \I]$. 
For their properties see for example \cite{BennetSharpley,BerghLofstrom}. Recall that $L^{q,q}=L^q$ and that $L^{q,\I}$ is also called weak $L^q$.

For $1<q<\infty$ and $1\le r\le \infty$, one has the Helmholtz decomposition
\begin{equation}
L^{q,r}(\R^3;\R^3) = L^{q,r}_\si(\R^3)\oplus G^{q,r}(\R^3)
\end{equation}
where
\begin{align*}
L^{q,r}_{\sigma}(\R^3)&=\{u\in L^{q,r}(\R^3;\R^3) |\quad \div u=0
\ {\rm in}\ \R^3\},
\\
G^{q,r}(\R^3)&=\{\nb p\in L^{q,r}(\R^3;\R^3) |\quad p \in L^1_{loc}(\R^3)
\}.
\end{align*}
Let $P$ denotes the Helmholtz projection operator from $L^{q,r}$ to $L^{q,r}_\sigma$
with respect to the Helmholtz decomposition. The Stokes operator $A=A_{q,r}$ on
$L^{q,r}_{\sigma}(\R^3)$ is defined by $A =
-P \Delta$ with domain 
\begin{equation}
D(A_{q,r}) = \{u \in L^{q,r}_{\sigma}(\R^3)|\quad \nb^2 u \in L^{q,r}(\R^3)
\}.
\end{equation}
Let
\begin{equation}
\cD = \{u \in C^2_c(\R^3;\R^3)|\quad \div u =0\}.
\end{equation}
It is dense in $L^{q,r}_\sigma(\R^3)$ for $1<q,r<\I$.

We will need the following estimates.

\begin{lemma}
\label{lem4} Let $\Om=\R^3$ and
suppose that $1 < p \le q < \infty$. There exist  constants $C_1=C_1(q)$ and 
$C_2=C(p,q)$ such that for every $v \in \cD$ and 
$u \in L^{p, 1}_{\sigma} ( \Om)$,
\begin{equation}
\label{est1}
\norm{\nb v}_{L^{q,1}( \Om)}\le C_1\norm{A^{1/2} v}_{L^{q,1}( \Om)},
\end{equation}
\begin{equation}\label{est2}
    \int_0^{\infty} t^{\frac3 {2 p} - \frac3 {2 q} - \frac 1 2} \left\| \nb e^{ - t
      A} u \right\|_{L^{q, 1}( \Om)} d t \leq C_2 \left\| u
    \right\|_{L^{p, 1}_\si( \Om)}.
\end{equation}
\end{lemma}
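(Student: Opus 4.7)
The plan is to treat the two estimates separately, deducing (\ref{est1}) from classical Riesz transform theory in Lorentz spaces and invoking Yamazaki \cite{Yamazaki} for the content of (\ref{est2}).

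For (\ref{est1}), I would exploit that on the full space $\R^3$ the Helmholtz projection $P$ commutes with the Laplacian on divergence-free fields, so $A v = -\De v$ and consequently $A^{1/2} v = (-\De)^{1/2} v$ for every $v \in \cD$. Then each partial derivative factors as $\pd_j v = R_j A^{1/2} v$, where $R_j = \pd_j (-\De)^{-1/2}$ is the $j$-th Riesz transform. Since $R_j$ is a Calder\'on--Zygmund operator, it is bounded on $L^{q}(\R^3)$ for every $1<q<\infty$; real interpolation (for instance via Boyd's theorem, or by writing $L^{q,1}$ as a real interpolation space between two nearby $L^{q_i}$'s) then upgrades this to boundedness on $L^{q,1}(\R^3)$, with constant depending only on $q$. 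Inequality (\ref{est1}) follows.

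For (\ref{est2}), the starting point is the Oseen kernel bound (\ref{estimate-T}) with $\ell=1$, $k=0$, which yields $|\nb S(x,t)| \lesssim (|x|+\sqrt{t})^{-4}$ and hence $\|\nb S(\cdot,t)\|_{L^{r,\infty}(\R^3)} \lesssim t^{-\frac{1}{2}-\frac{3}{2}(1-\frac{1}{r})}$ for each $1<r<\infty$. The Oseen representation (\ref{Oseen-formula}) (applied with $f=0$, $F=0$ but as a semigroup convolution) gives $\nb e^{-tA} u = \nb S(\cdot,t) \ast u$ when $u \in L^{p,1}_\sigma$. The Lorentz Young inequality $L^{r,\infty} \ast L^{p,1} \hookrightarrow L^{q,1}$ with $\tfrac{1}{r}+\tfrac{1}{p} = 1+\tfrac{1}{q}$ then produces the pointwise-in-$t$ bound $\|\nb e^{-tA}u\|_{L^{q,1}} \lesssim t^{-\frac{1}{2}+\frac{3}{2q}-\frac{3}{2p}}\|u\|_{L^{p,1}}$. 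This by itself is insufficient, however, because plugging it into (\ref{est2}) leaves an integrand of exact size $t^{-1}$, logarithmically divergent at both endpoints. The refinement of Yamazaki is to run the whole argument through real interpolation in $p$, moving from $L^{p_0,\infty}$ and $L^{p_1,\infty}$ on the input side with weights in $t$ chosen so that the natural $K$-functional description of $L^{p,1}$ absorbs precisely this $t^{-1}$ divergence; this is the step that makes the Lorentz index $1$ on the input essential and that gives the time-exponent $s=\infty$.

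The first estimate is essentially classical, so the main obstacle lies in the second: one cannot reach (\ref{est2}) by any purely pointwise-in-$t$ estimate, since scaling forces the $t$-integrand to be critical. One must set up the real interpolation so that the time integral is the $K$-functional norm of $u$ itself, and I would simply cite \cite{Yamazaki} for this step rather than reprove it, which is also the route taken by the authors in the sequel.
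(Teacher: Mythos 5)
Your proposal is correct and follows essentially the same route as the paper: for \eqref{est1} the paper likewise reduces to the classical $L^q$ bound $\norm{\nb v}_{L^q}\le C\norm{A^{1/2}v}_{L^q}$ (on $\R^3$ this is exactly your Riesz-transform identity) and upgrades to $L^{q,1}$ by real interpolation, and for \eqref{est2} the paper also simply invokes Yamazaki \cite{Yamazaki}, noting that his restriction $q\le 3$ stems only from the exterior-domain version of \eqref{est1} and disappears on $\R^3$. Your observation that no pointwise-in-$t$ bound can suffice (the integrand scales exactly like $t^{-1}$) correctly identifies why the Lorentz index $1$ and the interpolation structure in Yamazaki's argument are essential.
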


This lemma is also true if $\Om$ is a half space or a bounded domain in $\R^3$ with
smooth boundary. In fact, we have the estimates
\begin{equation}
\norm{\nb v}_{L^{q}( \Om)}\le C_{q,\Om}\norm{A^{1/2} v}_{L^{q}( \Om)},
\end{equation}
for all $1<q<\infty$ for $\R^3$, half-spaces and bounded domains, and for $1<q<3$ for exterior domains, see \cite[Theorem 3.6]{MR960838}, and \cite[(3.15)]{BorMiy90}. An interpolation gives \eqref{est1}. 
The borderline case $q=3$ of \eqref{est1} for exterior domains is proved by  Yamazaki \cite{Yamazaki}. 
Estimate  \eqref{est2} is proved by  Yamazaki \cite{Yamazaki} with  the restriction $q\leq 3$ since he uses \eqref{est1}. The same proof works for $\R^3$, half-spaces and bounded domains with $1<q<\I$.

\bigskip

With the help of Lemma \ref{lem4}, we can define the solution operator for 
the Stokes system
\begin{equation}
\pd_t v- \De v + \nb p = \nb \cdot F, \quad \div v=0, \quad v|_{t=0}=0
\end{equation}
in $\R^3 \times \R_+$, when $F$ is in $L^\I L^{s,\I}$. In this case \eqref{Oseen-formula} does not converge absolutely. %
Below $BC_w$ denotes
the class of bounded and weak-star continuous functions.

\begin{lemma}\label{th2.2} 
Fix $\frac 32<r<\I$. Let $s=\frac {3r}{r+3}\in (1,3)$, $r=s^{*}$. Define the linear operator  $\Phi  F$ by duality for $F=(F_{jk})_{j,k} \in L^\I (\R_+; L^{s,\infty}(\R^3))$:
\begin{equation}
\label{Phi1.def}
\left(( \Phi F)(t), \varphi \right) = \sum_{j, k = 1}^3
   \int_0^{t} \left( - F_{j k} \left( t - \tau, \cdummy \right),
   \partial_j \left( e^{ - \tau A}  \varphi
   \right)_k \right) d \tau    , \quad
\forall \varphi \in L^{r', 1}_{\sigma} ( \R^3 ), \forall t>0.
\end{equation}
Then
$\Phi  F \in BC_w([0,\I); L^{r,\I}_\sigma( \R^3 ))$, and for some $c=c(r)>0$,
\begin{equation}
\label{th2.2-eq2}
\norm{\Phi  F}_{L^\I L^{r,\I}} \lec \norm{ F}_{L^\I L^{s,\I}} .
\end{equation}
\end{lemma}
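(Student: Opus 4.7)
\textbf{Proof proposal for Lemma \ref{th2.2}.} The plan is to show that for each fixed $t>0$ the integral on the right side of \eqref{Phi1.def} defines a bounded linear functional on the test space $L^{r',1}_\sigma(\R^3)$, with operator norm uniform in $t$ and controlled by $\|F\|_{L^\infty L^{s,\infty}}$. Since $(L^{r',1})^* = L^{r,\infty}$ and we are testing only against divergence-free $\varphi$, this produces an element of $L^{r,\infty}_\sigma(\R^3)$ representing $(\Phi F)(t)$, and the uniform bound gives \eqref{th2.2-eq2}. Weak-star continuity in $t$ will then follow from dominated convergence once we have a $\tau$-integrable majorant.

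The key computation is the following. Given $\varphi \in L^{r',1}_\sigma(\R^3)$, by the Hölder inequality in Lorentz spaces,
\[
\bigl|\bigl(F_{jk}(t-\tau,\cdot),\partial_j(e^{-\tau A}\varphi)_k\bigr)\bigr|
\lec \|F(t-\tau)\|_{L^{s,\infty}(\R^3)} \,\|\nb e^{-\tau A}\varphi\|_{L^{s',1}(\R^3)},
\]
where I used the duality $L^{s,\infty}\times L^{s',1}\to L^1$. Since $s=\tfrac{3r}{r+3}$, one checks $\tfrac{1}{r'}-\tfrac{1}{s'}=\tfrac{1}{3}$, so in Yamazaki's estimate \eqref{est2} the choice $p=r'$, $q=s'$ (which satisfies $1<p\le q<\infty$ because $\tfrac32<r<\infty$ forces $1<s\le r$) yields the critical exponent $\tfrac{3}{2p}-\tfrac{3}{2q}-\tfrac12=0$, and therefore
\[
\int_0^{\infty}\|\nb e^{-\tau A}\varphi\|_{L^{s',1}(\R^3)}\,d\tau \;\le\; C\,\|\varphi\|_{L^{r',1}_\sigma(\R^3)}.
\]
Combining the two displays, the time integral in \eqref{Phi1.def} converges absolutely and
\[
|\langle(\Phi F)(t),\varphi\rangle| \le C\,\|F\|_{L^\infty(\R_+;L^{s,\infty})}\,\|\varphi\|_{L^{r',1}_\sigma},
\]
uniformly in $t>0$. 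By the duality $(L^{r',1}_\sigma)^*\cong L^{r,\infty}_\sigma$ (using density of $\cD$ in $L^{r',1}_\sigma$ for $1<r'<\infty$ as recorded after \eqref{est1}), this identifies $(\Phi F)(t)$ with an element of $L^{r,\infty}_\sigma(\R^3)$ satisfying \eqref{th2.2-eq2}. Divergence-freeness is automatic since we have only tested against divergence-free $\varphi$.

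For weak-star continuity, fix $\varphi\in L^{r',1}_\sigma$ and let $t_n\to t_0$ in $[0,\infty)$. Extending $F$ by zero to negative times and writing
\[
\langle(\Phi F)(t_n),\varphi\rangle = -\sum_{j,k}\int_0^{\infty}\bigl(F_{jk}(t_n-\tau,\cdot)\mathbf{1}_{\{\tau<t_n\}},\,\partial_j(e^{-\tau A}\varphi)_k\bigr)\,d\tau,
\]
the integrand is dominated pointwise in $\tau$ by $\|F\|_{L^\infty L^{s,\infty}}\|\nb e^{-\tau A}\varphi\|_{L^{s',1}}$, which is integrable in $\tau$ by the display above. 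Weak-star continuity of translations of $F$ in $L^{s,\infty}$ then lets us pass to the limit under the integral by dominated convergence, giving $(\Phi F)(t_n)\rightharpoonup^* (\Phi F)(t_0)$.

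I expect the main obstacle to be verifying the Lorentz-space duality pairing carefully (so that the functional-analytic identification $(L^{r',1}_\sigma)^*\cong L^{r,\infty}_\sigma$ is applied correctly and $(\Phi F)(t)$ lands in the solenoidal subspace rather than only in $L^{r,\infty}$ modulo gradients); once the exponent bookkeeping $\tfrac{1}{r'}-\tfrac{1}{s'}=\tfrac{1}{3}$ is in place, the remaining analytic steps reduce to a direct application of \eqref{est2} and dominated convergence.
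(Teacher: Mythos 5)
Your main estimate is exactly the paper's argument: Lorentz duality $L^{s,\infty}$--$L^{s',1}$ in space combined with Yamazaki's estimate \eqref{est2} applied with $(p,q)=(r',s')$, where the bookkeeping $\frac1{r'}-\frac1{s'}=\frac13$ makes the time exponent in \eqref{est2} equal to zero, giving the uniform-in-$t$ bound $|\langle(\Phi F)(t),\varphi\rangle|\le C\|F\|_{L^\infty L^{s,\infty}}\|\varphi\|_{L^{r',1}_\sigma}$ and hence \eqref{th2.2-eq2} via $(L^{r',1}_\sigma)^*\cong L^{r,\infty}_\sigma$. That part is correct and coincides with the paper's proof.

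The soft spot is the weak-star continuity in $t$, which the paper does not argue directly but delegates to the proof of \cite[Lemma 2.3]{KMT12}. Your dominated-convergence argument invokes ``weak-star continuity of translations of $F$'' pointwise in $\tau$; but $F$ is only bounded and measurable in time, and translates $F(t_n-\tau)$ of such a function along a fixed sequence $t_n\to t_0$ need not converge (weak-star or otherwise) for a.e.\ $\tau$ --- translation is continuous only in the $L^1_{loc}$ sense, so a.e.\ convergence is guaranteed only along subsequences. Since your integrand couples $F(t_n-\tau)$ with the fixed profile $\nb e^{-\tau A}\varphi$, you need one more approximation step to pass to the limit: for instance, approximate $\tau\mapsto\nb e^{-\tau A}\varphi$ in $L^1((0,\infty);L^{s',1})$ by functions piecewise constant in $\tau$, which reduces the claim to $L^1_{loc}$-continuity of translations of the scalar functions $\tau\mapsto(F(\tau),g)$; alternatively, prove continuity first for $\varphi$ in a dense class such as $\cD$ and then conclude for general $\varphi$ from the uniform bound already established. (Continuity at $t=0$, where $(\Phi F)(0)=0$, does follow directly from your integrable majorant and absolute continuity of the integral.) This is a repairable gap in a secondary part of the lemma; the quantitative estimate, which is what the rest of the paper uses, is proved correctly and by the same route as in the paper.
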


\begin{proof}
By Lemma \ref{lem4} with $(p,q)=(r',s')$, 
$\sup_t|(( \Phi F)(t), \varphi)|$ is bounded by $C \norm{ F}_{L^\I L^{s,\I}} \norm{\varphi}_{L^{r', 1}_{\sigma}}$ for any $\varphi \in L^{r', 1}_{\sigma} ( \R^3 )$, thus $\Phi F \in L^\I L^{r,\I}$ and we have \eqref{th2.2-eq2}.
Weak continuity can be shown by the same proof of \cite[Lemma 2.3]{KMT12}.
\end{proof}

\section{Proof of Theorem \ref{thm1}}
\label{sec3}
In this section we prove Theorem \ref{thm1}. We split the proof to
several steps. In \S\ref{S3.1}, we perform a cut-off and reformulate
the problem on the entire space $\R^3$. In \S\ref{S3.2}, we show the
existence of a regular solution of the reformulated problem.  In
\S\ref{S3.3}, we show that the original solution must locally agree
with the newly constructed regular solution.

We first show a better estimate of $p$.
Denote 
\begin{equation}
C_*= \norm{u}_{L^\I(0,1; L^{3,\I}(B_2))}+ \norm{p}_{L^m(0,1; L^1(B_2))}.
\end{equation}
Its first summand is small while the second may be large.
By taking the divergence of \eqref{NS}, $p$ is a distributional solution
of 
\begin{equation}
-\De p = \sum_{i,j} \pd_i \pd_j (u_i u_j).
\end{equation}
By the usual elliptic estimates, we have
\begin{equation}
\label{better.p}
\norm{p}_{L^m(0,1; L_x^{3/2,\I}(B_{1.9}))} 
\le C \norm{u}_{L^\I_t(0,1; L_x^{3,\I}(B_{2}))}^2 
+ C\norm{p}_{L^m (0,1; L_x^{1}(B_{2}))}\le C C_*.
\end{equation}

\subsection{Reformulation of the problem}
\label{S3.1}
In this subsection we perform a cut-off and reformulate the problem on
the entire space $\R^3$.

Let $\th(t)$ be a smooth cut-off function with $\th(t)=1$ for $t \ge
0.1$ and $\th(t)=0$ for $t< \frac 1{20}$. Let $\ph_0(x)$ be a smooth
cut-off function with $\ph_0(x)=1$ for $|x|\le 1$ and $\ph_0(x)=0$ for
$|x|\ge 5/4$.  Let $\ph(x,t) = \th(t) \ph_0(x)$.  Let $\td \ph(x)$ be
a smooth cut-off function so that $\td \ph(x)=1$ for $|x| \le 5/4$, and
$\td \ph(x)=0$ for $|x| \ge 3/2$.
Let%
\footnote{Although it is common to add a correction term $\hat u$ to the cut-off
$\ph u$ to make $\tilde u=\ph u + \hat u$ divergence-free, our
correction term $\hat u = \nabla \eta$ does not have compact support
as usual. It is because that we want $\hat u$ to be a potential so
that we can hide $\pd_t \hat u$ in $\nb \td p$, and hence need not
estimate it.  We have $|\nb \eta(x,t)|\lec |x|^{-2}$ for $|x|>2$, which is
sufficient for us. This technique has been used in, e.g.~\cite[(3.5)]{KMT12}. The term
$\De \eta$ in $\td p$ is not present in \cite{KMT12} since it is identically zero.}
\begin{equation}\label{tdu.def} 
\begin{split}
\td u &= \ph u + \nb \eta, \quad \eta = \frac 1{4\pi|x|} *_x ( \nb \ph
\cdot u ),%
\\
\td p &= \ph p - \pd_t \eta + \De \eta.
\end{split}
\end{equation}
Then $(v,q)=(\td u,\td p)$ satisfies 
$v|_{t=0}=0$
and
\begin{equation}%
\pd_t v - \De v + \nb q = f^0 + \nb \cdot F, \quad
\div v = 0
\end{equation}
in $\R^3 \times (0,1)$,
where
\begin{equation}
\label{f0.def}
f^0 =  u(\ph_t  +  \De \ph) + p \nb \ph + (\nb \ph\cdot u)u,
\end{equation}
\begin{equation}
F_{ij} = -2 (\pd_j \ph)u_i - \ph u_i u_j.
\end{equation}
We further single out the key quadratic term in $F$ and rewrite
\begin{equation}
- \ph u_i u_j = -\td \ph u_i  \ph u_j = \td \ph u_i(-v_j +\pd_j \eta).
\end{equation}
Then $F= f^1+f^2(v)$ with
\begin{align}
\label{f1.def}
f^1_{ij} &= -2 (\pd_j \ph)u_i +(\pd_j \eta)\td \ph u_i ,\\
\label{f2.def}
f^2_{ij}(v) &= - \td \ph u_i v_j  .
\end{align}

Summarizing, $(v,q)=(\td u,\td p)$ defined by \eqref{tdu.def} satisfies
\begin{equation}\label{v-eq0}
v|_{t=0}=0
\end{equation}
and
\begin{equation}\label{v-eq}
\pd_t v - \De v + \nb q = f^0 + \nb \cdot (f^1+f^2(v)) , \quad
\div v = 0,
\end{equation}
in $\R^3\times (0,1)$,
in the sense of distributions. We will treat
$u$ in the definitions of $f^k$ as known and $v$ as unknown.  Thus
$f^2(v)$ is linear in $v$.

We now take care of the source terms in \eqref{v-eq} and 
define $v^0 = (v^0_j)_{j=1}^3$ by \eqref{Oseen-formula},
\begin{align}
\label{v0.def}
(v^0)_i(x,t)&=\int_0^{t}\int_{\R^3}S_{ij}(x-y,s)
(f^0)_{j}(y,t-s)dyds \nonumber
\\
 &
\qquad -\int_0^{t}\int_{\R^3}(\partial_{k}S_{ij}(x-y,s))
(f^1)_{jk}(y,t-s)dyds. 
\end{align}
It solves 
\begin{equation}\label{v0-eq}
\pd_t v^0 - \De v^0 + \nb q^0 = f^0 + \nb \cdot f^1 , \quad
\div v^0 = 0.
\end{equation}
That is, it solves \eqref{v-eq} with $f^2$ removed.

Denote 
\begin{equation}
\cX^q = L^\I((0,1),L^{q,\I}). 
\end{equation}

\begin{lemma}
\label{th3.1}
For some $\de=\de(m)>0$,
we have $ v^0 \in
C([0,1]; L^{3,\I}_\si(\R^3)\cap L^{3+\de,\I}_\si(\R^3) )$ and $\norm{v^0}_{\cX^3\cap \cX^{3+\de}} \le C_m C_*$.
\end{lemma}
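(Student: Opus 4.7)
The plan is to estimate the two integrals making up $v^0$ in \eqref{v0.def} by Lorentz-space Stokes semigroup bounds, tracking carefully how the weakest piece of the source—the pressure contribution in $f^0$—limits the achievable target integrability $3+\delta$.

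I begin with the source bounds. Split $f^0$ of \eqref{f0.def} into the non-pressure part $f^0_N := u(\ph_t + \De\ph) + (\nb\ph\cdot u)u$ and the pressure part $f^0_P := p\nb\ph$; both are supported in $B_{5/4}$. Using H\"older in Lorentz spaces, $f^0_N \in L^\I_t L^{3/2,\I}_x$ with norm $\le C(C_* + C_* \e) \le C C_*$. The improved pressure estimate \eqref{better.p} gives $f^0_P \in L^m_t L^{3/2,\I}_x$ with norm $\le C C_*$; this is the only term with finite time exponent. For $f^1$ from \eqref{f1.def}, the difficult factor is $\nb\eta$: since $-\De\eta = \nb\ph\cdot u \in L^\I_t L^{3,\I}_x$ is supported in $B_{5/4}$, Calder\'on--Zygmund in Lorentz spaces yields $\nb^2\eta \in L^\I_t L^{3,\I}_x$, and the explicit kernel gives decay $|\nb\eta(x,t)|\lec \|u(t)\|_{L^{3,\I}}|x|^{-2}$ for $|x| \ge 2$. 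A Sobolev-type embedding (or a direct Riesz potential estimate exploiting compact support) then yields $\nb\eta \in L^\I_t L^{q}_{\rm loc}$ for every $q < \I$. Consequently $f^1$ is compactly supported in $B_{3/2}$ and belongs to $L^\I_t L^{s,\I}_x$ for every $s \in [1,3)$, with norm $\le C_s C_*$.

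Next I apply the Oseen representations. For the divergence contribution $\int_0^t \pd_k S(\cdummy,s)*f^1(\cdummy,t-s)\,ds$ in \eqref{v0.def}, Lemma \ref{th2.2} with exponent $s=3/2$ yields a bound in $L^\I_t L^{3,\I}_x$, and with $s$ slightly larger than $3/2$ yields $L^\I_t L^{3+\delta,\I}_x$ for the corresponding $\delta > 0$. For the non-divergence contribution $\int_0^t S(\cdummy,s)*P f^0(\cdummy,t-s)\,ds$ I use the Lorentz-space Stokes semigroup bound
\begin{equation*}
\|e^{-sA}Pg\|_{L^{r,\I}} \le C\, s^{-\frac{3}{2}(\frac{2}{3}-\frac{1}{r})}\|g\|_{L^{3/2,\I}}, \quad \tfrac{3}{2} \le r < \I,
\end{equation*}
a direct consequence of \eqref{estimate-T} and the boundedness of $P$ on Lorentz spaces. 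Applied to $f^0_N$, the time integral contributes $t^{3/(2r)}\|f^0_N\|_{L^\I L^{3/2,\I}}$, finite for every $r \in [3,\I)$. Applied to $f^0_P$, H\"older in time requires $s^{-1+3/(2r)} \in L^{m'}(0,1)$, i.e., $m'(1 - \tfrac{3}{2r}) < 1$, equivalently $r < \tfrac{3m}{2}$. Since $m > 2$, this admits $r = 3$ and any $r = 3 + \delta$ with $\delta < \tfrac{3(m-2)}{2}$.

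Combining the three contributions and choosing $\delta = \delta(m) \in (0, \tfrac{3(m-2)}{2})$ yields $\|v^0\|_{\cX^3 \cap \cX^{3+\delta}} \le C_m C_*$, with $C_m \to \I$ as $m \to 2^+$ since both $\delta(m)$ shrinks and the H\"older constant diverges. For continuity, each contribution vanishes as $t \to 0^+$ (at the polynomial rates indicated above for the non-divergence pieces, and by the weak-star continuity asserted in Lemma \ref{th2.2} for the divergence piece), giving $v^0(0) = 0$; continuity on $(0,1]$ follows by dominated convergence using the smooth decay \eqref{estimate-T} of the Oseen kernel. The principal obstacle is exactly the pressure bookkeeping: $p \in L^m_t$ with $m > 2$ only barely enables $r = 3$ via the constraint $r < 3m/2$, and this is precisely where the endpoint nature of the hypothesis $p \in L^m(0,1;L^1)$ is felt.
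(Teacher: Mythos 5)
Your proof is correct and follows essentially the same route as the paper: the same source estimates ($\nb\eta$ via the $|x|^{-2}$ potential and generalized Young, $f^0\in L^m(0,1;L^{3/2,\I})$ from \eqref{better.p}, $f^1\in\cX^{r}$ for $r<3$), the same Oseen-kernel bounds \eqref{estimate-T} for the $f^0$ contribution, and the identical constraint $\de<\tfrac32(m-2)$ coming from the pressure's $L^m_t$ integrability. The only minor variations are that you route the $f^1$ contribution through Lemma \ref{th2.2} (fine, since for compactly supported $f^1\in\cX^{s}$, $s<3$, the absolutely convergent convolution in \eqref{v0.def} coincides with the duality-defined $\Phi$) instead of bounding $\norm{\pd_k S(\cdot,s)}_{L^b}$ directly, and you use H\"older in time where the paper cites Riesz potential estimates; both substitutions are harmless and yield the same conclusion.
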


\begin{proof}
We first estimate $\nabla
  \eta$, which is bounded by $|x|^{-2}*|\nabla \varphi \cdummy u|$. Note $\nabla \varphi \cdummy u \in L^{r,\I}(\R^3)$ for any $1\le r\le 3$ since it has compact support.
For any $\frac 32 < q< \I$, choose $r\in (1,3)$ so that $\frac 1q=\frac 1r -\frac 13$ (i.e. $q=r^*$). By generalized
Young's inequality we have 
\begin{equation} 
\label{gdeta-est}
\| \nabla \eta \|_{L_{x}^{q,\I}} \leq C\left\| | x |^{-2}
\right\|_{L_{x}^{3 / 2,\infty}} \| \nabla \varphi \cdummy u
\|_{L_{x}^{r, \infty}}\le C 
\e, 
\end{equation}
which is uniform in $0<t<1$. 

For $f^0$ and $f^1$, since they have compact support,
by $u \in \cX^3$, $p \in L^m L^{3/2,\I} $ \eqref{better.p}, and \eqref{gdeta-est},
\begin{equation}
\label{f01-est}
\norm{f^0}_{L^m(0,1;L^1 \cap L^{3/2,\I})} + \norm{f^1}_{\cX^{r}} \le C_r C_*,
\quad
\forall r \in (1,3).
\end{equation}

We now estimate $v^0$.
By generalized Young's inequality, for $\de \ge 0$,
\[
\norm{v^0_i(t)}_{L^{3+\de,\I}} \leq c \int_0^t 
\|  S_{i j} (\cdot, s) \|_{L_{x}^a} \, g(t-s)
+ \|  \pd_k S_{i j} (\cdot, s) \|_{L_{x}^b} \| f^1 \|_{\cX^{r}}ds
\]
where $g(t)=\| f^0(\cdot,t) \|_{L_{x}^{3 / 2, \infty}}\in L^m_t(0,1)$,
 $\frac 1a= \frac 1{3+\de}-\frac 1{3/2}+1$ and 
$\frac 1b= \frac 1{3+\de}-\frac 1{r}+1$.
By \eqref{estimate-T} and $\norm{(|x|+\sqrt s)^{-k}}_{L^q_x}=C s^{\frac 3{2q}-\frac k2}$ for $k =3,4$ and $kq>3$,
\[
\norm{S_{ij}(\cdot,s)}_{L^a} \lec s^{3/(2a)-3/2},\quad
 \norm{ \pd_k S_{ij}(\cdot,s)}_{L^b} \lec s^{3/(2b)-2}.
\]
Thus
\[
\norm{v^0_i(t)}_{L^{3+\de,\I}} \leq c \int_0^t 
s^{-1+\frac{3}{2(3+\de)} } \, g(t-s)
+ s^{-\frac 12+ \frac 3{2(3+\de)} - \frac 3{2r}} \| f^1 \|_{\cX^{r}}ds.
\]
By Riesz potential estimates \cite[Lemma 7.12]{GilTru}, $\norm{v^0_i(t)}_{L^{3+\de,\I}} $
is uniformly bounded for $0<t<1$ if
\[
\frac 1m <  \frac 3{2(3+\de)}, \quad 
-\frac 12+ \frac 3{2(3+\de)} - \frac 3{2r}>-1.
\]
This is the case if $m>2$, $0 \le \de < \frac 32(m-2)$, and if we choose $r$ so that
$1<\frac 3r < \frac 3{3+\de}+1$.

The proof for the continuity in time is similar to that for heat potentials, and is omitted.
\end{proof}

Since $v^0$ satisfies \eqref{v0-eq},
Eqn.~\eqref{v-eq} is formally equivalent to
\begin{equation}
\label{v-int-eq}
v = v^0 - \Phi ( \tilde{\varphi} u \otimes v)
\end{equation}
for $0<t<1$,
where
 the operator $\Phi$ is defined in Lemma \ref{th2.2}.

\begin{lemma}
\label{th3.2}
The vector field $v=\td u$ defined by \eqref{tdu.def}, after redefinition 
on a set of time of measure zero, belongs to $
BC_w([0,1]; L^{3,\I}_\si(\R^3))$ with its norm bounded by $CC_*$. It
satisfies \eqref{v-int-eq} in $L^\I(0,1; L^{3,\I}_\si(\R^3))$.
\end{lemma}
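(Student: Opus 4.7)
The three claims to verify are: (a) $\td u\in L^\I((0,1);L^{3,\I}_\si(\R^3))$ with norm $\lesssim C_*$; (b) the integral identity \eqref{v-int-eq}; and (c) weak-$*$ time continuity. I will take them in this order, with (b) being the main technical point.

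For (a), write $\td u = \ph u + \nb\eta$. The factor $\ph u$ is supported in $\bar B_{5/4}\subset B_2$, so $\|\ph u(t)\|_{L^{3,\I}(\R^3)}\le \|u(t)\|_{L^{3,\I}(B_2)}\le \e$ for a.e.\ $t\in(0,1)$. For the correction, estimate \eqref{gdeta-est} applied with $r=3/2$ and $q=3$ (both in its admissible range) gives $\|\nb\eta(t)\|_{L^{3,\I}(\R^3)}\lesssim\e$ uniformly in $t$. Divergence-freeness of $\td u$ follows from $\div(\ph u) = \nb\ph\cdot u$ (using $\div u = 0$) together with $\De\eta = -\nb\ph\cdot u$, which is the defining identity of the Newtonian potential applied to the compactly supported function $\nb\ph\cdot u$.

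For (b), the formal derivation preceding the lemma shows that $v=\td u$ satisfies the distributional identity \eqref{v-eq} on $\R^3\times(0,1)$; this is rigorous because the new source terms are either compactly supported in $\bar B_{3/2}$ or absorbed into $\nb\td p$ via the definitions of $\eta$ and $\td p$. Since $\ph(\cdot,t) = 0$, and hence $\eta(\cdot,t) = 0$, for $t<1/20$, we have $v\equiv 0$ there, providing zero initial data. Subtracting \eqref{v0-eq}, the difference $w := v - v^0$ is a divergence-free distributional solution of the Stokes system with source $-\nb\cdot(\td\ph u\otimes v)$, vanishing for $t<1/20$; by H\"older in Lorentz spaces, this source lies in $L^\I((0,1);L^{3/2,\I})$, so Lemma \ref{th2.2} with $(r,s)=(3,3/2)$ gives $\Phi(\td\ph u\otimes v)\in BC_w([0,1];L^{3,\I}_\si)$. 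To identify $w$ with $-\Phi(\td\ph u\otimes v)$, test both against $\zeta(x,t) = \chi_\ep(t)\,(e^{-(s-t)A}\psi)(x)$ for $\psi\in\cD$ and $\chi_\ep$ a smooth cutoff approximating $\mathbf{1}_{(0,s)}$, after a spatial truncation $\xi_R(x) = \xi(x/R)$ that is removed in the limit $R\to\I$ using the decay of $e^{-\tau A}\psi$ inherited from $\psi\in\cD$ and \eqref{estimate-T}. Divergence-freeness of $\psi$ together with $\pd_t(e^{-(s-t)A}\psi) = A e^{-(s-t)A}\psi$ collapse the principal part and the pressure contribution, yielding
\[
(w(s),\psi) = \int_0^s \bke{\td\ph\, u(\tau)\otimes v(\tau),\,\nb e^{-(s-\tau)A}\psi}\,d\tau = -\bke{\Phi(\td\ph u\otimes v)(s),\,\psi}
\]
for a.e.\ $s\in(0,1)$ by \eqref{Phi1.def}. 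Density of $\cD$ in $L^{3/2,1}_\si(\R^3)$ then gives \eqref{v-int-eq} as an identity in $L^\I((0,1);L^{3,\I}_\si)$.

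For (c), the right-hand side of \eqref{v-int-eq} is weak-$*$ continuous: $v^0\in C([0,1];L^{3,\I}_\si)$ by Lemma \ref{th3.1}, and $\Phi(\td\ph u\otimes v)\in BC_w([0,1];L^{3,\I}_\si)$ by Lemma \ref{th2.2}. Redefining $v$ on the null set in $t$ where \eqref{v-int-eq} might fail produces $v\in BC_w([0,1];L^{3,\I}_\si)$ with norm at most $CC_*$. The main obstacle in this plan is the rigorous passage in (b) from the distributional PDE to the semigroup-tested identity, which requires handling the non-compact support of $e^{-(s-t)A}\psi$ via the two-parameter cutoff $\chi_\ep\xi_R$ and using the spatial decay of $e^{-\tau A}\psi$ together with \eqref{estimate-T} to kill the resulting commutator and boundary errors.
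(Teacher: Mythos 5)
Your overall architecture is the same as the paper's: bound $\varphi u+\nabla\eta$ in $L^\I L^{3,\I}$, write the distributional equation for $w=v-v^0$, test against the backward Stokes semigroup $e^{-(s-t)A}\psi$ to identify $w=-\Phi(\td\ph u\otimes v)$ by duality with $L^{3/2,1}_\si$, and conclude weak-star continuity from the continuity of $v^0$ and of $\Phi$. Parts (a) and (c) are fine, and your reordering in (c) (prove \eqref{v-int-eq} for a.e.\ $s$ first, then redefine on a null set using continuity of the right-hand side) is a legitimate variant of the paper's route, which instead establishes $w\in BC_w([0,1];L^{3,\I}_\si)$ first by showing $\pd_t w\in L^\I(0,1;X^*)$ for a solenoidal test space $X$; note, though, that your $\chi_\ep\to\mathbf 1_{(0,s)}$ limit only recovers $(w(s),\psi)$ at Lebesgue points of $t\mapsto(w(t),\psi)$, so you need the countable-dense-family-of-$\psi$ argument to get a common full-measure set of times.

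The genuine gap is in the identification step (b), in the claim that ``divergence-freeness of $\psi$ \dots collapse[s] the principal part and the pressure contribution.'' Once you insert the spatial truncation $\xi_R$, the test field $\chi_\ep\,\xi_R\, e^{-(s-t)A}\psi$ is no longer solenoidal, since $\div(\xi_R e^{-\tau A}\psi)=\nabla\xi_R\cdot e^{-\tau A}\psi\neq0$, so the pressures do not drop out: you pick up the term $\iint (\td p-q^0)\,\nabla\xi_R\cdot e^{-(s-t)A}\psi\,\chi_\ep\,dx\,dt$. This is exactly where the construction is delicate: $\td p$ contains $-\pd_t\eta$, which was deliberately hidden in the pressure because it cannot be estimated directly (it involves $\pd_t u$), and $q^0$ is never estimated anywhere. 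To make your route work you must integrate the $\pd_t\eta$ piece by parts in time and use the decay $|\eta|\lesssim|x|^{-1}$, $|\nb\eta|\lesssim|x|^{-2}$ together with the decay of $e^{-\tau A}\psi$ and $Ae^{-\tau A}\psi$ from \eqref{estimate-T}, and separately derive a representation and decay estimate for $q^0$ from the Oseen formula for $v^0$; alternatively, truncate within the solenoidal class (e.g.\ with a Bogovskii correction). The paper avoids the issue entirely: it never leaves the divergence-free world, extending the admissible test functions from $\cD$ by density first to the solenoidal space $X$ (giving the weak continuity of $w$) and then to $\eta\in L^{3,1}_\si$ in the semigroup-tested identity, so no pressure term ever reappears. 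As written, your assertion that solenoidality of $\psi$ kills the pressure contribution is not valid, and the missing estimates on $\pd_t\eta$ and $q^0$ are precisely the nontrivial content of that step.
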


\begin{proof}
Recall $v = \tilde{u} = \varphi u + \nabla \eta$. It is clear that
  $\varphi u \in L^{\infty} ( 0, 1 ; L^{3, \infty} (\R^3))$.  and so is $v$. 

Since $(u,p)$ is a
distributional solution of (\ref{NS}), $(v,q)=(\td u,\td p)$ is a distributional
solution of (\ref{v-eq}), or, for $w=v-v^0$, (recall $f^2 = \tilde{\varphi} u
\otimes v$)
\begin{equation}
\label{eq3.16}
- \iint w(\pd_t\ph +\De \ph) dxdt = -\iint f^2 : \nb \ph \,dxdt
\end{equation}
for any $\ph \in C^{2}_{c}(\R^3 \times (0,1))$ with $\div \ph=0$.
Taking  $\ph  = \theta (t) \eta(x)$ with $\theta (t) \in C_c^1 (0, 1)$ and 
$\eta \in \cD=C^2_{c,\sigma}(\R^3)$,
we get 
\begin{equation}
\label{eq3.24}
- \int (w,\eta)\th'(t) dt = \int \bke{(w,\De \eta)- (f^2,\nb \eta)}\th(t)  dt.
\end{equation}
Using $w \in L^\I L^{3,\I}$ and $f^2 \in L^{\I}L^{3/2,\I}$, the above equation
is valid for $\eta \in X$ where
\begin{equation}
X=\{\eta \in L^{3/2,1}_\si(\R^3): \nb^2 \eta \in L^{3/2,1}(\R^3), \nb \eta \in L^{3,1}(\R^3)\}.
\end{equation}
Eq.~\eqref{eq3.24} implies $\pd_t w \in L^\I(0,1; X^*)$. By redefining
$v(t)$ on a set of time of measure zero, we have $w(t) \in C([0,1];
X^*)$. Together with $w \in L^\I L^{3,\I}$, we get $w \in BC_w ([0,1];
L^{3,\I}_\si)$, and hence so is $v$.

For $0<t \leq t_1<1$, we can extend $\ph$ to the form $\varphi = \theta (t)
\psi (t, x)$, where $\theta (t) \in C_c^1 (0, t_1)$, $\psi (t, x) =
e^{- ( t_1 - t) A} \eta$, and $\eta \in \cD$. Using $\pd_t \psi +
\Delta \psi=0$, we get
\begin{equation}
\int_0^{\infty} - ( w(t), e^{- ( t_1 - t) A} \eta) \theta' ( t) dt =-
\int_0^{\infty}  ( f^2(t), \nabla e^{- (
t_1 - t) A} \eta)  \theta(t)\, dt .
\end{equation}
Using $w \in L^\I L^{3,\I}$ and $f^2 \in L^{\I}L^{3/2,\I}$, the above
equality is valid for $\eta \in L^{3,1}_\si(\R^3)$. As in
{\cite{KMT12}}, taking $\theta ( t) = \phi \left( \frac{t -
t_1}{\de} + 1 \right) - \phi \left( \frac{t}{\de}
\right)$ where $0 < \de \ll 1$, $\phi ( t) \in C^1_c (\R)$, $\phi ( t)
= 1$ for $t < 0$ and $\phi ( t) = 0$ for $t > 1$, then sending $\de
\rightarrow 0_+$, we have $\theta ( t) \rightarrow 1_{0 < t < t_1}$
and, by continuity of $( w ( t), e^{- ( t_1 - t)A} \eta)$,
\begin{equation} 
\int_0^{\infty} - ( w(t), e^{- ( t_1 - t) A} \eta) \theta' ( t) dt \to ( w (
     t_1), \eta) - ( w ( 0), e^{- t_1 A} \eta) = ( w ( t_1), \eta) . 
\end{equation}
Here we used the fact that $v (0) = 0$. Therefore
\begin{equation} ( w ( t_1), \eta) = -\int_0^{t_1} (
     f^2, \nabla e^{- ( t_1 - t) A} \eta)  dt. 
\end{equation}
Since $\eta \in L^{3, 1}_\si(\R^3)$ is arbitrary, $w = -\Phi (f^2)$ by definition.
\end{proof}

\subsection{Existence of regular solutions}
\label{S3.2}

In this subsection we prove the following existence lemma.
\begin{lemma}[Existence]\label{th3.3}
   Eq.~\eqref{v-int-eq} has a solution $v=\bar v $ in  $
  BC_w([0,1]; L_{\sigma}^{3, \infty}(\R^{3})\cap L_{\sigma}^{3+\de, \infty}(\R^{3}))$, where $\de>0$ is the small constant in Lemma \ref{th3.1}.
\end{lemma}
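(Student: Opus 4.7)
\bigskip

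\noindent\textbf{Proof plan.} The strategy is a standard Picard iteration for the integral equation \eqref{v-int-eq} in the Banach space
\[
Y = BC_w\bigl([0,1];\, L^{3,\infty}_\sigma(\R^3) \cap L^{3+\delta,\infty}_\sigma(\R^3)\bigr),
\]
with norm $\|v\|_Y = \|v\|_{\cX^3} + \|v\|_{\cX^{3+\delta}}$. By Lemma \ref{th3.1}, the free term $v^0$ already lies in $Y$ with norm at most $C_m C_*$. I will show that the map $T v := v^0 - \Phi(\tilde\varphi u \otimes v)$ is a contraction on a small ball of $Y$ provided $\e = \|u\|_{L^\infty L^{3,\infty}(B_2)}$ is sufficiently small, and then pass to the (unique) fixed point $\bar v$.

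\medskip

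\noindent\textbf{Key bilinear estimate.} Since $\tilde\varphi u \in L^\infty(0,1;L^{3,\infty}(\R^3))$ with norm bounded by $C\e$, Hölder's inequality in Lorentz spaces gives, for each $q\in\{3,\,3+\delta\}$,
\[
\|\tilde\varphi u \otimes v\|_{L^{s_q,\infty}} \le C\,\|\tilde\varphi u\|_{L^{3,\infty}}\,\|v\|_{L^{q,\infty}},
\qquad \tfrac{1}{s_q}=\tfrac{1}{3}+\tfrac{1}{q}.
\]
Observe that $s_q = 3q/(q+3)$, so $q = s_q^*$ in the notation of Lemma \ref{th2.2}. For $q=3$ we have $s_q = 3/2$ and $r=3 > 3/2$; for $q=3+\delta$ we have $r = 3+\delta > 3/2$ when $\delta$ is as in Lemma \ref{th3.1}. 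Hence Lemma \ref{th2.2} applies with each choice and yields
\[
\|\Phi(\tilde\varphi u\otimes v)\|_{\cX^q} \le C\,\|\tilde\varphi u\otimes v\|_{L^\infty L^{s_q,\infty}} \le C\e\,\|v\|_{\cX^q},
\qquad q\in\{3,\,3+\delta\}.
\]
Adding,
\[
\|\Phi(\tilde\varphi u\otimes v)\|_{\cX^3\cap\,\cX^{3+\delta}} \le C_0\e\,\|v\|_{\cX^3\cap\,\cX^{3+\delta}},
\]
and by linearity the same bound holds with $v$ replaced by $v_1-v_2$.

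\medskip

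\noindent\textbf{Contraction and fixed point.} Pick $\e_1$ so small that $C_0\e_1 \le \frac12$. Then on the closed ball $\{v\in Y : \|v\|_Y \le 2 C_m C_*\}$, the map $T$ satisfies $\|Tv\|_Y \le \|v^0\|_Y + \tfrac12\|v\|_Y \le 2 C_m C_*$ and $\|Tv_1-Tv_2\|_Y \le \tfrac12 \|v_1-v_2\|_Y$. By Banach's fixed point theorem (or Picard iteration starting at $v^0$), there is a unique fixed point $\bar v \in Y$. Lemma \ref{th2.2} also provides that $\Phi$ maps bounded $F$ to $BC_w$, so iterating ensures the fixed point lies in $BC_w([0,1]; L^{3,\infty}_\sigma \cap L^{3+\delta,\infty}_\sigma)$, as required.

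\medskip

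\noindent\textbf{Main obstacle.} The nontrivial point is that the natural critical space $\cX^3$ alone leaves no room for smallness at the endpoint, so one must invoke the small factor $\e$ from $\tilde\varphi u$ rather than from $v$; this is exactly what the Lorentz-Hölder inequality plus Lemma \ref{th2.2} provides. Propagating the estimate simultaneously at the critical exponent $3$ and the subcritical exponent $3+\delta$ only works because Lemma \ref{th2.2} requires $r > 3/2$ (both $3$ and $3+\delta$ satisfy this) and because Lemma \ref{th3.1} has already placed $v^0$ in both spaces. The weak-continuity at $t=0$ (with $\bar v(0)=0$, matching the zero initial data from $v^0$) follows from the $BC_w$ conclusion of Lemma \ref{th2.2} applied to $\Phi(\tilde\varphi u \otimes \bar v)$.
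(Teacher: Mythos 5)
Your proposal is correct and takes essentially the same route as the paper: the identical contraction map $v \mapsto v^0 - \Phi(\tilde\varphi u \otimes v)$ on the ball of radius comparable to $\norm{v^0}_{\cX^3\cap\,\cX^{3+\de}}$, with the bilinear bound obtained from Lorentz-space H\"older plus Lemma \ref{th2.2} (at the pairs $(3/2,3)$ and $(\tfrac{3(3+\de)}{6+\de},3+\de)$), smallness supplied by $\e=\norm{u}_{L^\I L^{3,\I}}$, and weak-star continuity of the fixed point inherited from Lemmas \ref{th2.2} and \ref{th3.1}. No gaps to report.
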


By Serrin-type subcritical regularity criteria for very weak solutions (see  Theorem \ref{thm6.1} in Appendix), $\bar v\in L^\I(\R^3 \times ( \tau, 1))$ for any $0<\tau<1$.

\begin{proof}
Let $Y=\cX^3 \cap \cX^{3+\de}$ with
$\norm{v}_Y=\norm{v}_{\cX^3}+\norm{v}_{\cX^{3+\de}}$.  By Lemma \ref{th3.1}
we have $v^0 \in Y$ with $\norm{v^0}_{Y} \le C_1 C_*$ for some $C_1=C_1(m)>0$.
For $v\in Y$, define
\begin{equation}
\La v = v^0 - \Phi (\tilde{\varphi} u \otimes v).
\end{equation}
We want to show that $\La$ is a contraction mapping in 
\begin{equation}
\label{eq3.30}
Y_1 = \{v \in Y:\quad \norm{v}_{Y}\le 2C_1C_* \}.
\end{equation}
First suppose $v \in Y_1$.  By Lemma \ref{th2.2} and H\"older
inequality for weak Lebesgue spaces \cite[page 15]{MR2445437}, for $s=\frac {3(3+\de)}{6+\de}$, $3+\de=s^*$,
\begin{align*}
\norm{\La v}_Y &\le \norm{v^0}_Y + \norm{\Phi (\tilde{\varphi} u \otimes v)}_Y
\\
&\le C_1 C_* + C \norm{\tilde{\varphi} u \otimes v}_{\cX ^{3/2} \cap \cX ^{s}}
\\
&\le C_1 C_* + C \norm{\tilde{\varphi} u}_{\cX ^{3}} \norm{ v}_{Y}
\le  C_1 C_* + C\e C_1 C_* \le 2 C_1 C_*,
\end{align*}
if $\e>0$ is sufficiently small.  This shows $\La v \in Y_1$.

Next we consider the difference: If $v_1,v_2 \in Y_1$, we have
\begin{align*}
\norm{\La v_1- \La v_2}_Y &= \norm{\Phi (\tilde{\varphi} u \otimes
  (v_1-v_2))}_Y \\ &\le C \norm{\tilde{\varphi} u \otimes
 (v_1-v_2) }_{\cX ^{3/2} \cap \cX ^{s}} \\ &\le C
\norm{\tilde{\varphi} u}_{\cX ^{3}} \norm{ v_1-v_2 }_{Y} \le C\e
\norm{v_1-v_2 }_{Y}.
\end{align*}
Therefore, if $\varepsilon$ is small enough (independent of $m$ and $C_*$), $\La$ is a contraction
mapping in $Y_1$ and has a unique fixed point $\bar v = \La \bar v$ in $Y_1$.
Since both $v^0 $ and $\Phi (\tilde{\varphi} u \otimes \bar v)$ are
weak-star continuous by Lemmas \ref{th2.2} and \ref{th3.1}, so is
$\bar v$.
\end{proof}

\subsection{Uniqueness}
\label{S3.3}

In this subsection we prove the following uniqueness lemma.
\begin{lemma}
\label{th3.4}
There is $\e_0>0$ such that, if $\e<\e_0$, and if $\bar{v},v\in
L^{\infty}(0,1;L_{\sigma}^{3,\infty}(\R^{3}))$ are two solutions of
\eqref{v-int-eq}, then $\bar{v}=v$.
\end{lemma}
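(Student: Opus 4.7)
Proof proposal. Set $w = \bar v - v$. Since both $\bar v$ and $v$ satisfy the fixed-point equation \eqref{v-int-eq}, subtraction and the linearity of $\Phi$ give
\begin{equation}
w = -\Phi(\tilde\varphi \, u \otimes w)
\end{equation}
as an identity in $L^\infty(0,1; L^{3,\infty}_\sigma(\R^3))$. The plan is then a one-line contraction argument applied to this identity.

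I will apply Lemma \ref{th2.2} with $r=3$ (permitted since $3/2 < 3 < \infty$), for which the conjugate exponent of the lemma is $s = \tfrac{3r}{r+3} = 3/2$. This gives
\begin{equation}
\|w\|_{L^\infty(0,1; L^{3,\infty})} \leq C \, \|\tilde\varphi \, u \otimes w\|_{L^\infty(0,1; L^{3/2,\infty})}.
\end{equation}
For the right-hand side I invoke H\"older's inequality for weak Lebesgue spaces (as already used after \eqref{eq3.30}), namely $L^{3,\infty}\cdot L^{3,\infty}\hookrightarrow L^{3/2,\infty}$, together with the fact that $\tilde\varphi$ is bounded and that $\|u\|_{L^\infty(0,1;L^{3,\infty}(B_2))}\le\varepsilon$ while $\tilde\varphi$ is supported in $B_2$. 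This yields
\begin{equation}
\|\tilde\varphi \, u \otimes w\|_{L^\infty(0,1; L^{3/2,\infty})} \leq C\, \varepsilon \, \|w\|_{L^\infty(0,1; L^{3,\infty})}.
\end{equation}
Combining, $\|w\|_{L^\infty L^{3,\infty}} \leq C\varepsilon \, \|w\|_{L^\infty L^{3,\infty}}$, so choosing $\varepsilon_0 = 1/(2C)$ forces $w=0$.

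The only point requiring minor care is that the $L^\infty_t L^{3,\infty}_x$ norm of $w$ is a priori finite (so that subtracting it from itself is legitimate), which is immediate from the assumption $\bar v, v \in L^\infty(0,1; L^{3,\infty}_\sigma(\R^3))$. I do not expect a genuine obstacle: the argument is the same sort of fixed-point closure that drove the existence proof in Lemma \ref{th3.3}, but applied to the difference; smallness of $\varepsilon$ gives uniqueness globally in the class $L^\infty L^{3,\infty}_\sigma$, without needing the higher-integrability tail $L^{3+\delta,\infty}$ that was used to set up existence.
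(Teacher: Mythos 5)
Your proposal is correct and follows essentially the same route as the paper's proof: subtract the two fixed-point identities, apply the bound \eqref{th2.2-eq2} of Lemma \ref{th2.2} with $r=3$, $s=3/2$, use weak-space H\"older together with $\norm{\tilde\varphi u}_{\cX^3}\le C\e$, and conclude $w=0$ for $\e$ small. No further comment is needed.
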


\begin{proof}
Let $ w = v - \bar{v}$. It satisfies
$ w = \Phi \left( - \tilde{\varphi} u \otimes w \right)$ and hence
\begin{align*}
\norm{w}_{\cX^3} &= \norm{\Phi (\tilde{\varphi} u \otimes
  w)}_{\cX^3} \\ &\le C \norm{\tilde{\varphi} u \otimes
w }_{\cX ^{3/2} } \\ &\le C
\norm{\tilde{\varphi} u}_{\cX ^{3}} \norm{ w }_{ \cX ^{3}} \le C\e
\norm{w }_{\cX ^{3}}.
\end{align*}
Thus $w=0$ if $C\e<1$.  This proves Lemma \ref{th3.4}.
\end{proof}

\subsection{Conclusion of proof}
{\it{Proof of Theorem \ref{thm1}:}} By Lemma \ref{th3.3}, there exists
a regular solution $\bar v$ of \eqref{v-int-eq}, which coincides with
$\td u \in \cX^3$ by Lemma \ref{th3.4}. Hence $\td u$ is
regular. Since our distributional solution $u$ of (1.1) equals $\td u$
in $B_{1}\times(1/10,1)$, $u$ is regular in $
B_{1}\times(1/10,1)$. 
This proves Theorem \ref{thm1}.

\section{Proof of Theorem \ref{thm2}}
\label{sec4}
\begin{proof}
Let $(u,p)\in L^2_{loc}(\Om;\R^3) \times  L^1_{loc}(\Om) $ be a distributional
solution of the stationary Navier-Stokes equations in $\Om$ with  $\norm{u}_{L^{3,\I}(\Om)}\le \e \ll 1$.
For any $x_0 \in \Om$ we choose $R=R_x >0$ so that $B(x_0,2R)\subset \Om$. Define
\begin{equation}
v(x,t)=R u(x_0+Rx), \quad \pi(x,t)=R^2 p(x_0+Rx).
\end{equation}
 Then $(v,\pi)$ is a distributional solution of \eqref{NS} in $B_2 \times (0,1)$ with
trivial dependence on time and
\begin{equation}
\norm{v}_{L^\I(0,1;L^{3,\I}(B_2))} = \norm{u}_{L^{3,\I}(B(x_0,2R))}\le \e
\end{equation}
\begin{equation}
\norm{\pi}_{L^\I(0,1;L^{1}(B_2))} =R^{-1} \norm{p}_{L^{1}(B(x_0,2R))}< \I.
\end{equation}
By Theorem \ref{thm1}, $v$ is bounded in $B_1 \times (\frac 1{10},1)$ if $\e$ is sufficiently small. Thus
$u$ is bounded in $B(x_0,R)$. Since $x_0 \in \Om$ is arbitrary, we have shown $u \in L^\I_{loc}(\Om)$.
\end{proof}

\section{Proof of Theorem \ref{thm3}}
\label{sec5}

The proof of Theorem \ref{thm3} is based on the following interior estimate.

\begin{lemma}[Interior estimates for Stokes system]
\label{th5.1}
Let $B_{R_1} \subset B_{R_2} \subset \R^3$ be concentric balls with
$0<R_1<R_2$.  Assume that $v \in L^1_x(B_{R_2})$ is a very weak
solution of the Stokes system
\begin{equation}
- \Delta v_i + \pd_i p = \pd_j f_{ij}, \quad \div v = 0
\quad \text{in }B_{R_2},
\end{equation}
where $f_{ij} \in L^q(B_{R_2})$, $1<q<\infty$.
Then $v \in W^{1,q}_{loc}$, there is a $p \in L^q_{loc}$ so that the
above equation is satisfied in distribution sense and, for some constant
$c=c(q,R_1,R_2)$,
\begin{equation} \label{eqA1}
\norm{\nb v}_{L^q(B_{R_1})} + \inf_{a \in \R}
\norm{p-a}_{L^q(B_{R_1})} \le c \norm{f}_{L^q(B_{R_2})} + c
\norm{v}_{L^1(B_{R_2}\backslash B_{R_1})}.
\end{equation}
\end{lemma}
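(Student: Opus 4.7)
The approach is to split $v$ into two pieces: the explicit part driven by $f$ (constructed via the stationary Stokes fundamental solution in $\R^3$) and a smooth ``homogeneous'' remainder that satisfies a mean-value type interior estimate.

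First I would choose an intermediate radius $R_1<R'<R_2$ and a cut-off $\chi\in C^\I_c(B_{R_2})$ with $\chi\equiv 1$ on $B_{R'}$. Extending $F_{ij}:=\chi f_{ij}$ by zero to $\R^3$, the classical $L^q$ theory (Calder\'on--Zygmund estimates for the stationary Stokes fundamental solution) yields $w$ and $q_1$ with $\nb w,q_1\in L^q(\R^3)$ solving $-\De w+\nb q_1=\nb\cdot F$, $\div w=0$ distributionally on $\R^3$, together with
\[
\norm{\nb w}_{L^q(\R^3)}+\norm{q_1}_{L^q(\R^3)}+\norm{w}_{L^1(B_{R'})}\le C\norm{f}_{L^q(B_{R_2})}.
\]
Setting $\td v:=v-w$, the very-weak formulation tested against divergence-free fields cancels the $f$- and $w$-contributions on $B_{R'}$, so $\td v$ is a very weak solution of the \emph{homogeneous} Stokes system there. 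A Weyl-type lemma (mollify divergence-freely, test, pass to the limit) then shows that $\td v\in C^\I(B_{R'})$ and produces an associated smooth pressure $\pi$ by de Rham.

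The heart of the argument is an interior inequality
\[
\norm{\nb\td v}_{L^\I(B_{R_1})}\le C(R_1,R')\,\norm{\td v}_{L^1(B_{R'}\setminus B_{R_1})}
\]
for smooth solutions of the homogeneous Stokes system, which improves on the obvious full-ball bound by allowing an \emph{annular} norm on the right. It relies on each component of $\td v$ being biharmonic (apply $\De$ to the Stokes equation and use $\div\td v=0$) together with the identity $\int_{B_\rho}h=|B_\rho|h(0)$ for harmonic $h$ and its biharmonic analogue via Pizzetti's formula; these express pointwise values and derivatives at the center from an annular integral, after subtracting the known contribution of a smaller concentric ball. Combined with $\norm{\td v}_{L^1(B_{R'}\setminus B_{R_1})}\le\norm{v}_{L^1(B_{R_2}\setminus B_{R_1})}+\norm{w}_{L^1(B_{R'})}$, this produces the stated bound on $\norm{\nb v}_{L^q(B_{R_1})}$, while $p:=q_1+\pi$ serves as the pressure for $v$ with the required $L^q$ control on $B_{R_1}$ modulo a constant coming from $\pi$. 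The \textbf{main obstacle} is precisely this annular interior estimate; a naive cut-off argument would only deliver the full-ball norm on the right-hand side. Producing a pressure for the originally pressure-free very weak $v$ is a secondary difficulty, resolved by the de Rham argument on the smooth remainder $\td v$.
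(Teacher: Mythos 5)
Your overall architecture --- peel off a Calder\'on--Zygmund solution $w$ of the whole-space Stokes system with force $\nabla\cdot(\chi f)$, show by a Weyl/mollification argument that the remainder $\tilde v=v-w$ is a smooth solution of the homogeneous Stokes system with a harmonic pressure $\pi$ recovered by de Rham, and finish with an interior estimate whose right-hand side sees $\tilde v$ only on the annulus --- is sound, and it is essentially how \cite[Theorem 2.2]{ST00} proceeds; the paper itself offers no proof, it simply cites that theorem and remarks that its argument needs only $v\in L^1$. Two small points in your assembly: you need the annular estimate not just for $\nabla\tilde v$ but also for second derivatives, since the pressure bound comes from $\nabla\pi=\Delta\tilde v$; and the bound $\norm{w}_{L^1(B_{R'})}\le C\norm{f}_{L^q(B_{R_2})}$ is fine because $w$ is a convolution of the compactly supported $\chi f$ with a locally integrable kernel.

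The genuine gap is the mechanism you propose for the key annular estimate, which you yourself identify as the heart of the matter. Mean-value and Pizzetti identities are taken over balls or annuli \emph{centered at the point being evaluated}. For $x\in B_{R_1}$, an annulus $\{r_1<|y-x|<r_2\}$ is disjoint from $\bar B_{R_1}$ only if $r_1>R_1+|x|$, and it lies in $B_{R'}$ only if $r_2\le R'-|x|$; both can hold only when $|x|<(R'-R_1)/2$. Since the only quantity allowed on the right-hand side is $\norm{\tilde v}_{L^1(B_{R'}\setminus B_{R_1})}$ --- the inner ball must not appear --- no covering or chaining argument can propagate the bound inward. So your argument controls $\tilde v$ only on $B_{(R'-R_1)/2}$, which is strictly smaller than $B_{R_1}$ whenever $R'<3R_1$; this is exactly the regime in which the lemma is applied in Section 5 ($R_1=3/2$, $R_2=2$). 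The standard repair (and, in essence, what \cite{ST00} and \cite[Lemma A.2]{CSTY08} do) is a cutoff representation: take $\chi_1=1$ on $B_{R_1+\delta}$ with $\supp\chi_1\subset B_{R'-\delta}$, represent $\chi_1\tilde v$ (or each biharmonic component, or the harmonic vorticity $\curl\tilde v$) through the relevant fundamental solution, and integrate by parts until every occurrence of $\tilde v$ is multiplied by derivatives of $\chi_1$ --- hence supported in the annulus --- and paired with a kernel evaluated at $x-y$ with $|x-y|\ge\delta$, smooth and bounded for $x\in B_{R_1}$; this yields $\sup_{B_{R_1}}\bke{|\tilde v|+|\nabla\tilde v|+|\nabla^2\tilde v|}\le C\norm{\tilde v}_{L^1(B_{R'}\setminus B_{R_1})}$ on all of $B_{R_1}$, after which your conclusion follows as you describe. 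Note finally that your construction, as written, does prove the weaker inequality with $\norm{v}_{L^1(B_{R_2})}$ on the right (interior estimates for biharmonic functions on balls centered at $x$ suffice for that), and that weaker form is all that Section 5 actually invokes; but it is not the statement of the lemma.
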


This lemma is \cite[Theorem 2.2]{ST00}. Although the statement in \cite{ST00} assumes
$v \in W^{1,q}_{loc}$, its proof only requires $v \in L^1$. 
 Similar estimates for the time-dependent Stokes
system appeared in \cite[Lemma A.2]{CSTY08}, and include Lemma \ref{th5.1} as a special  case.
An important feature of these estimates is that a bound of the
pressure $p$ is not needed in the right side.  This is desirable if we
want to study solutions for which we do not a priori have any
estimate of the pressure.

\medskip

\begin{proof}[Proof of Theorem \ref{thm3}]
Denote $\e = \norm{u}_{L^{3,\I}(\Om)}\le \e_3$.
For any $B(x_0,2R)\subset \Om$, 
 define
\begin{equation}
v(x,t)=R u(x_0+Rx).
\end{equation}
 Then 
\begin{equation}%
\norm{v}_{L^{3,\I}(B_2)} = \norm{u}_{L^{3,\I}(B(x_0,2R))} \le \e,
\end{equation}
and $v$ is a very weak solution of the Stokes system
\begin{equation}
- \Delta v_i + \pd_i p = \pd_j f_{ij}, \quad \div v = 0
\end{equation}
in $B_2$
with
\begin{equation}
f_{ij} = - v_i v_j.
\end{equation}

Fix any $1<q<3/2$. We have $v \in L^{3,\I}(B_{2}) \subset L^{2q}(B_{2})$ and
hence $\norm{f}_{L^q(B_{2})}\le C\norm{v}_{L^{2q}(B_{2})}^2 \le C \e^2$.
By Lemma \ref{th5.1}, $v \in W^{1,q}(B_{3/2})$ and there is a function $p\in L^{q}(B_{3/2})$ so that
\begin{equation}
\norm{\nb v}_{L^q(B_{3/2})} + \inf_{a \in \R}
\norm{p-a}_{L^q(B_{3/2})} \le c \norm{f}_{L^q(B_{2})} + c
\norm{v}_{L^1(B_{2})}\le C \e,
\end{equation}
and $(v,p)$ is a distributional solution of the stationary Navier-Stokes equations.
We may add a constant to $p$ so that the infimum of $\norm{p-a}_{L^q(B_{3/2})}$ occurs at $a=0$.
By Theorem \ref{thm2}, we conclude $v \in L^{\I}(B_1)$. Moreover,
by \eqref{eq3.30},
\begin{equation} 
\norm{v}_{L^{4}(B_{5/4})} \le C (\norm{v}_{L^{3,\I}(B_{3/2})} + \norm{p}_{L^{1}(B_{3/2})}) \le C \e.
\end{equation}
The usual bootstraping argument with small $\e$ gives $\norm{v}_{L^{\I}(B_{1})} \le C \e$ (compare the proof of Theorem \ref{thm6.1}).
Thus $|u|\le C\e /R$ in $B(x_0,R)$. 
\end{proof}

\section*{Appendix}
\setcounter{equation}{0}
\setcounter{theorem}{0}
\renewcommand{\theequation}{A.\arabic{equation}}
\renewcommand{\thetheorem}{A.\arabic{theorem}}

In the first part of this Appendix we prove a {\it subcritical} Serrin-type interior
regularity criteria for very weak solutions, which is used in the proof of Theorem \ref{thm1} . It assumes higher
integrability of $u$ than Theorem \ref{thm1}. However, it is concerned
with more general very weak solutions (than distributional solutions) and makes no assumption on the pressure.

\begin{theorem}
\label{thm6.1}
Suppose 
for some $q\in (3,\I]$, $s\in [3,\I]$, $3/q+2/s<1$, 
\[
u\in L^s(0,1;L^q(B_1))\cap  L^\I(0,1;L^1(B_1)) 
\]
is a very weak solution of \eqref{NS}, then $u\in
L^\I(\tau,1;L^{\I}(B_{1-\tau})) $ for any small $\tau>0$.
\end{theorem}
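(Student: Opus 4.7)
The plan is to parallel the three-step argument of Theorem \ref{thm1}---cut off and reformulate on $\R^3$, construct a regular solution by a Banach fixed point, identify it with the original by uniqueness---but with the subcritical gap $\ve := 1-\tfrac{3}{q}-\tfrac{2}{s}>0$ playing the role that smallness of $\|u\|_{L^{3,\I}}$ played in Theorem \ref{thm1}. I would first reduce to the case $s<\I$: since $(0,1)$ is bounded and subcriticality forces $q>3$, replacing $s$ by a sufficiently large $s'<\I$ preserves $3/q+2/s'<1$.

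The first substantive obstacle is that $u$ is only a very weak solution, so no pressure is available for the cutoff of Section \ref{S3.1}. To remedy this I would apply the time-dependent interior Stokes estimate of \cite[Appendix]{CSTY08} to $u$ with forcing $u\otimes u\in L^{s/2}(0,1;L^{q/2}_{loc}(B_1))$, obtaining $\nb u\in L^{s/2}L^{q/2}_{loc}$ and a local pressure $p\in L^{s/2}L^{q/2}_{loc}$ for which $(u,p)$ is a distributional solution on any compact subcylinder of $B_1\times(0,1)$. This step is available here precisely because $s/2<\I$---the finite-time-exponent issue mentioned in the remark following Theorem \ref{thm3}.

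Having $(u,p)$ in hand, I would run the cutoff of Section \ref{S3.1} verbatim to produce $\td u=\ph u+\nb\eta$ satisfying the integral equation $v=v^0-\Phi(\td\ph\,u\otimes v)$ on $\R^3\times(0,1)$. In contrast with Lemma \ref{th3.1}, standard (non-Lorentz) parabolic $L^aL^b$-estimates suffice, and the compact support of the inhomogeneous terms gives $v^0$ in a space strictly more integrable than $L^sL^q$. The Banach contraction for $v\mapsto\Phi(\td\ph\,u\otimes v)$ then closes in a ball of $L^sL^q(\R^3\times(0,T))$ without any smallness on $u$, since the subcritical gain produces a factor $T^{\ve}$ in the bilinear estimate; iterating over $(0,1)$ in finitely many steps yields a fixed point $\bar v\in L^sL^q$, which by the same subcritical estimates bootstraps to $L^\I L^\I$ on the interior region $B_{1-\tau}\times(\tau,1)$.

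Uniqueness of the fixed point, and hence the identification $\td u=\bar v$, follows by the argument of Lemma \ref{th3.4} applied in $L^sL^q$. The main difficulty I foresee is the pressure-reconstruction step: verifying that the time-dependent interior Stokes estimate is compatible with the very weak formulation (where test functions are divergence-free) and yields a $p$ for which the cutoff pressure $\td p = \ph p - \pd_t\eta + \De\eta$ has the integrability needed to make the reformulation rigorous. Once this is carried out, the remaining steps are strictly easier than their critical counterparts in Section \ref{sec3} because the subcritical gap $\ve$ replaces the smallness hypothesis.
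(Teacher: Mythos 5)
Your plan pivots on a step that is not available: reconstructing, from the very weak formulation alone, a local pressure $p\in L^{s/2}(0,1;L^{q/2}_{loc})$ for which $(u,p)$ becomes a distributional solution on compact subcylinders. No interior estimate can deliver this, because such a pressure need not exist. Take Serrin's example $u(x,t)=g(t)\nb h(x)$ with $h$ harmonic near $\bar B_1$ and $g$ bounded but nowhere differentiable: it satisfies every hypothesis of Theorem \ref{thm6.1} (it is even in $L^\I_t C^\I_{x,loc}$, so it does not contradict the conclusion), but any pressure compatible with \eqref{NS} would have to satisfy $\nb p=-g'(t)\nb h(x)-\tfrac12 g(t)^2\nb|\nb h(x)|^2$ in the sense of distributions, and testing against $\chi(t)\psi(x)$ with $\int \nb h\cdot\psi\,dx\neq 0$ shows that $p\in L^1_{loc}$ would force $g\in W^{1,1}_{loc}(0,1)$ --- a contradiction. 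This is exactly why the time-dependent analogue of Lemma \ref{th5.1} in \cite[Appendix]{CSTY08} is used in the paper only for the bound $\norm{\nb u}_{L^{s/2}L^{q/2}_{loc}}\lec \norm{u\ot u}_{L^{s/2}L^{q/2}}+\norm{u}_{L^\I L^1}$, and why the hypothesis $u\in L^\I L^1$ appears at all: a very weak solution is determined only up to the addition of a field $g(t)\nb h(x)$ with no time regularity, so the reformulation of Section \ref{S3.1} cannot be run ``verbatim'' --- the term $p\,\nb\ph$ in \eqref{f0.def} has no meaning. Any repair must first split off this harmonic-gradient part, which changes the fixed-point scheme substantially; the difficulty you flagged as technical is in fact fatal to the step as formulated.

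For contrast, the paper's proof of Theorem \ref{thm6.1} avoids the pressure entirely: after the gradient bound from \cite[Lemma A.2]{CSTY08}, it passes to the vorticity $\om=\curl u$ (which annihilates the troublesome gradient part), runs Serrin's induction on nested cylinders for the heat equation $(\pd_t-\De)\om=\div(u\ot\om-\om\ot u)$ to reach $\om\in L^\I$, and then recovers $\nb u\in L^\I L^4$ from the div--curl elliptic estimate \cite[Lemma A.1]{CSTY08} together with $u\in L^\I L^1$, hence $u\in L^\I$ by Sobolev embedding; no pressure, no cut-off, no fixed point. Two further cautions about your outline even where a pressure is assumed (i.e.\ in the setting of the paper's second appendix theorem, which your scheme actually parallels): the reduction to $s<\I$ and the contraction with the factor $T^{(1-3/q-2/s)/2}$ are fine, but the final ``bootstrap to $L^\I L^\I$'' cannot be closed by citing that appendix theorem, since its last step invokes Theorem \ref{thm6.1} itself --- you would have to iterate the Duhamel formula up to $L^\I$ directly; and the identification $\td u=\bar v$ requires the analogue of Lemma \ref{th3.2}, not only the uniqueness argument of Lemma \ref{th3.4}.
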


{\it Remark}. The second assumption $u\in L^\I(0,1;L^1(B_1)) $ is
necessary because of Serrin's example $u(x,t)=g(t) \nb h(x)$ for some
harmonic $h$, $\De h=0$. The condition $s\ge 3$ could be relaxed but the proof would be
more
tedious.

\begin{proof}
The weak form of \eqref{NS} can be considered as the weak form of the
inhomogeneous Stokes system
\[
\pd_t u - \De u + \nb p = \div F, \quad F_{ij} = -u_i u_j.
\]
Choose integer $K >0$ so that
\[
0<\si:=\frac 2{3K}< \frac 15(1-\frac 3q-\frac2s).
\]
Let $\de=\tau/(K+1)$ and $Q_k = B_{1-k\de} \times (k\de,1)$, $0 \le k \le K+1$.  Note $F
\in L^{s/2} L^{q/2}(Q_0)$.  By \cite[Lemma A.2]{CSTY08}, $\nb u$
exists and for any large $m<\I$,
\[
\norm{\nb u}_{ L^{s/2} L^{q/2}(Q_1)}\le c \norm{F}_{ L^{s/2} L^{q/2}(Q_0)} + \norm{u}_{L^\I L^{1}(Q_0)}<\I.
\]

Consider now
the vorticity $\om: =\curl u\in L^{p_0}(Q_1)$, $p_0 =  3/2$. It satisfies the inhomogeneous heat equation
\[
(\pd_t - \De )\om = \div G, \quad G_{ij}= u_i \om_j - u_j \om_i.
\]
 The same induction argument of
Serrin \cite{Serrin62} using potential estimate shows that
\[
\om \in L^{p_k}_{t,x}(Q_{k}), \quad \frac 1{p_k} = \frac
1{p_0}-k\si, \quad 0 \le k \le K,
\]
\[
G\in L^{a_k}_t L^{b_k}_x(Q_{k}),\quad \frac 1{a_k} = \frac
1{p_k}+\frac 1s\le 1, \quad \frac 1{b_k} = \frac 1{p_k}+\frac 1{q}\le 1.
\]
Note  $p_K=\I$ and $a_k \ge 1$, $b_k \ge 1$ thanks to $s,q \ge 3$.

The usual elliptic estimate \cite[Lemma A.1]{CSTY08}
gives
\[
\norm{\nb u}_{L^\I L^4(Q_{K+1})} \le c \norm{\om}_{L^\I
  L^4(Q_{K})} + c\norm{u}_{L^\I L^{1}(Q_{0}) }< \I.
\]
By Sobolev imbedding, $u \in L^\I(Q_{K+1})$.
\end{proof}

In the second part of this Appendix we prove a
nonendpoint borderline 
analogue of Theorem \ref{thm1}. It is not used in the rest of paper. Its proof is similar to
that of Theorem \ref{thm1} but is simpler: It uses the pointwise estimates \eqref{estimate-T}  of Oseen's tensor instead of Lemma \ref{th2.2}.

\begin{theorem}
Let $q\in (3,\I]$, $s\in [3,\I]$ with $3/q+2/s \leq 1$ and $m\ge1$,	$m >
\frac{2q}{3(q-2)}$. Suppose
\[
u\in L^s(0,1;L^q(B_1))\cap  L^\I(0,1;L^1(B_1)),\quad p\in L^{m}(0,1;L^{1}(B_{1})) 
\]
is a distributional solution of \eqref{NS} in $B_1 \times (0,1)$, then $u\in
L^\I(\tau,1;L^{\I}(B_{1-\tau})) $ for every small $\tau>0$.  
\end{theorem}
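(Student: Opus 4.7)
The plan is to follow the same three-step structure as Theorem \ref{thm1}---cutoff reformulation, existence of a regular solution on $\R^3$, and a uniqueness argument forcing the cutoff $\tilde u$ to coincide with this regular solution---while replacing the Yamazaki-type duality estimate (Lemma \ref{th2.2}) by direct pointwise bounds on Oseen's tensor from \eqref{estimate-T}. Since $3/q+2/s\le 1$ with $s\ge 3$, the space-time convolutions defining $v^0$ and the nonlinear Oseen integral converge absolutely in an ordinary Lebesgue sense, which is what makes the nonendpoint case simpler.

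First I would upgrade the pressure bound by applying the distributional identity $-\De p=\sum_{i,j}\pd_i\pd_j(u_iu_j)$ with interior elliptic estimates; combining $u\otimes u\in L^{s/2}L^{q/2}$ with $p\in L^mL^1(B_1)$ gives $p\in L^{\mu}L^{q/2}(B_{1-\tau/3})$ for a suitable $\mu$ depending on $m,s,q$. I then pick smooth cutoffs $\ph(x,t)=\th(t)\ph_0(x)$ supported slightly outside the target cylinder and reformulate $\tilde u=\ph u+\nb\eta$, $\tilde p=\ph p-\pd_t\eta+\De\eta$ exactly as in \eqref{tdu.def}. The resulting integral equation is
\begin{equation}
v = v^0 - \int_0^{t}\!\!\int_{\R^3}\pd_k S_{ij}(x-y,\si)\,(\tilde\ph\, u_j v_k)(y,t-\si)\,dy\,d\si ,
\end{equation}
with $v^0$ the Oseen potential of $f^0$ and $f^1$ as in \eqref{v0.def}.

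Second I would choose a working space $L^r(0,1;L^{q_1}(\R^3))$ with $(r,q_1)$ slightly subcritical relative to $(s,q)$ and verify $v^0$ lies there. The pointwise estimates $|D^\ell_x S(x,t)|\lec(|x|+\sqrt t)^{-3-\ell}$ from \eqref{estimate-T}, followed by generalized Young and Riesz potential bounds exactly as in the proof of Lemma \ref{th3.1}, handle each term of $f^0$ and $f^1$ separately; the hypothesis $m>\tfrac{2q}{3(q-2)}$ should enter precisely to guarantee that the pressure contribution $p\,\nb\ph$ produces a bounded $L^rL^{q_1}$ output (when $q=3$ this reduces to the $m>2$ condition of Theorem \ref{thm1}, and it relaxes as $q\uparrow\I$, which is consistent with the threshold). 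With $v^0$ placed, I set up a fixed-point argument for $\La v=v^0 -(\text{Oseen integral of }\tilde\ph u\otimes v)$ on a small ball of $L^rL^{q_1}$; H\"older gives the needed gain from $\tilde\ph u\in L^sL^q$ and the derivative bound on $S$ provides the smoothing, so the contraction closes either by choosing $(r,q_1)$ with a strict subcritical margin or, if necessary, by restricting to a short initial time slab and iterating (no smallness of $\|u\|$ is needed in the subcritical regime).

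For uniqueness I would argue as in Lemma \ref{th3.4}: the cutoff $\tilde u$ of the original distributional solution satisfies the same Oseen integral equation (verified exactly as in Lemma \ref{th3.2}, but easier here because absolute convergence holds), and the difference $\tilde u-\bar v$ satisfies a homogeneous linear equation whose associated operator is a strict contraction on a sufficiently short time interval. Once $\bar v=\tilde u$ on $(0,1)$, I apply Theorem \ref{thm6.1} to $\bar v\in L^s(0,1;L^q(\R^3))\cap L^\I(0,1;L^1(\R^3))$ (the $L^1$-in-$x$ bound is automatic by compact support of $\ph u$ and decay of $\nb\eta$) to conclude $\bar v\in L^\I(\tau,1;L^\I(B_{1-\tau}))$, which by construction coincides with $u$ there. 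I expect the main obstacle to be the exponent bookkeeping: selecting a single pair $(r,q_1)$ in which $v^0$ lives, the contraction closes, and the subcritical hypothesis of Theorem \ref{thm6.1} holds, and tracking these three constraints simultaneously is what should force the sharp threshold $m>\tfrac{2q}{3(q-2)}$.
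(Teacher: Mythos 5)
Your overall route is the paper's: cut off and reformulate on $\R^3$, write the Duhamel/Oseen integral equation using the pointwise kernel bounds \eqref{estimate-T} instead of the duality operator of Lemma \ref{th2.2}, estimate $v^0$ (this is exactly where $m>\frac{2q}{3(q-2)}$ enters, via the requirement $\frac1m<\frac3s+\frac3{2q}$ for the time convolution against $\|f^0(t)\|_{L^{q/2}}\in L^m_t$, matching your $q=3$ and $q\to\I$ sanity checks), run a contraction, identify the fixed point with $\td u=\ph u+\nb\eta$ as in Lemma \ref{th3.2}, and finish with Theorem \ref{thm6.1}. Two structural simplifications in the paper are worth adopting: the strictly subcritical case is disposed of by Theorem \ref{thm6.1} alone (no pressure needed), so only $3/q+2/s=1$ remains, where $s<\I$ forces $\norm{u}_{L^s((t_0,t_1);L^q)}\to0$ as $t_1-t_0\to0$; smallness of $\norm{u}_{L^sL^q}$ is then assumed outright after a scaling argument, which is cleaner than your ``short slab and iterate'' plan (iterating over slabs makes you handle nonzero data at the interfaces).

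The genuine gap is in your insistence on a \emph{single} pair $(r,q_1)$ in the critical case. No one space can do all the jobs you assign to it: to identify the constructed solution with $\td u$ you need a space that contains $\td u$, and $\td u$ is only known to lie in $L^sL^q$ with $3/q+2/s=1$; but to invoke Theorem \ref{thm6.1} you need strict subcriticality, and your explicit final step --- applying Theorem \ref{thm6.1} to $\bar v\in L^s(0,1;L^q)\cap L^\I(0,1;L^1)$ --- fails outright when $3/q+2/s=1$. The paper resolves this with a two-exponent scheme: the fixed point is constructed in $L^s(L^q\cap L^{q+\de})$ for a small $\de>0$ (the key estimate $\norm{\Phi(\td\ph u\otimes v)}_{L^sL^{q+\de}}\lec\norm{u}_{L^sL^q}\norm{v}_{L^sL^{q+\de}}$ holds both for $\de=0$ and for small $\de>0$, and $v^0\in L^s(L^q\cap L^{q+\de})$), while uniqueness is proved only in $L^sL^q$, where $\td u$ lives; the coincidence $\td u=\bar v$ then upgrades $u$ locally to $L^sL^{q+\de}$, and Theorem \ref{thm6.1} is applied with the strictly subcritical pair $(s,q+\de)$, not $(s,q)$. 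With that repair (and the reduction to the critical case noted above), your outline coincides with the paper's proof.
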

 
Note that all norms $\norm{u}_{L^s L^q}$, $\norm{u}_{L^\I L^1}$ and  $\norm{p}_{L^m L^1}$ need not be small. The condition $m >
\frac{2q}{3(q-2)}$ is implied by $m\ge1$ if $q>6$.

\begin{proof}
  By Theorem
  \ref{thm6.1}, it suffices to consider the case $3/q+2/s=1$. Since $q>3$ we have $s<\I$, and $\norm{u}_{L^s(t_0,t_1;L^q(B_1))}\to 0$ as $t_1-t_0\to0$, uniformly in $0\le t_0<t_1\le 1$. To prove the stated $L^\I_{loc}$ bound,
 we may assume $\norm{u}_{L^s(0,1;L^q(B_1))}$ is sufficiently small. The general case
follows
by a usual scaling argument.

Recall the system \eqref{v-eq} for the localized velocity $v=\ph u+\nb \eta$,
  \[ \partial_t v - \Delta v + \nabla q = f^0 + \nabla \cdummy f^1 + \nabla
     \cdummy f^2 ( v) ,\quad \div v=0,
\]
  where $f^0, f^1$ and $f^2$ are given in \eqref{f0.def},  \eqref{f1.def} and  \eqref{f2.def}, and $v$ vanishes at $t=0$ and at $|x|=1$.
Consider the map
\[ 
\label{A11}
\Lambda v = v^0 - \Phi ( \tilde{\varphi} u \otimes v) 
\]
where $v_0$ is defined as in \eqref{v0.def}, but
\[
\Phi ( F)_i(x,t) = \int_0^t \int_{\R^3} \partial_k S_{i j} ( x - y, s)
     F_{j k} ( y, t - s) d y d s \]
is defined by the usual convolution, and not by duality as in Lemma \ref{th2.2}.

Our goal is to prove that $\Lambda v$ has a fixed point $v\in
L^{s}L^{q}\cap L^{s}L^{q+\delta}$ for some $\delta>0$, and then to prove it is unique in $L^{s}L^{q}$. We will attain our goal in four steps.

\medskip
{\it{Step 1:}} We show that $v^{0}\in L^{s}(0,1; L^{q}\cap
L^{q+\delta}( \R^3))$  for some $\delta>0$. 

We may assume $m \le s/2$.
 Since $u \in L^s L^q$, $p \in L^m L^1$ and $\varphi$ has compact support, we have
  $p,f^0 \in L^m ( L^1 \cap L^{q / 2})$ by \eqref{better.p}, H\"older's inequality and $f^1 \in L^s
  L^r$ for any $1 \leq r \leq q$ by embedding theorem. 
For $\de=0$	or $0<\de\ll 1$, by Young's inequality and \eqref{estimate-T}, we have
  \begin{eqnarray*}
\label{a2v0}
    \| v^0(t) \|_{q + \delta} & \leq & \int_0^t \| S_{i j} ( s) \|_a \| f^0_j ( t
    - s) \|_{q / 2} d s + \int_0^t \| \partial_k S_{i j} ( s) \|_b \| f^1_j (
    t - s) \|_r d s\\
    & \lec & \int_0^t s^{3 / ( 2 a) - 3 / 2} \| f^0 ( t - s) \|_{q / 2} d s
    + \int_0^t s^{3 / ( 2 b) - 2} \| f^1 ( t - s) \|_r d s
  \end{eqnarray*}
  where $1 / ( q + \delta) = 1 / a + 2 / q - 1 = 1
  / b + 1 / r - 1$, and $1\le a,b<\I$. By Young's inequality, 
\[ 
\| v^0 \|_{L^s L^{q + \delta}} \lec \| f^0  \|_{L^m L^{q / 2}}
     \| s^{ 3 / (2 a) - 3 / 2 }\|_{L^{\rho}} + \| f^1  \|_{L^s L^r}
     \int_{0}^{1} s^{3 / ( 2 b) - 2}ds,
\]
where  $1/s =1/m+1/\rho - 1$. The last integral is finite if  $1 \leq b < 3 / 2$,
which can be achieved by taking $r=q$, and $\de=0$ or $\de> 0$ sufficiently small.
For $\| s^{ 3 / (2 a) - 3 / 2 }\|_{L^{\rho}}$ to be finite,
we need
\[ 
\label{A15}
 ( \frac 3 2 - \frac 3 {2a}) \rho <  1 .
\]
By the relations
\[ 
 \frac{1}{q+\delta} =\frac{1}{a} +\frac{2}{q}-1, \quad \frac{1}{s} = \frac{1}{m} + \frac{1}{\rho} -
     1, \quad \frac{2}{s}+\frac{3}{q}=1 ,
\]
\eqref{A15} is equivalent to
\[ 
\frac{1}{m} < \frac{3}{s} + \frac{3}{2 ( q + \delta)}  , 
\]
which  can be achieved if
\[ 
\frac{1}{m} < \frac{3}{s} + \frac{3}{2 q}= \frac32-\frac3q , 
\]
and by taking  $\de=0$ or $\de> 0$ sufficiently small.

\medskip
{\it{Step 2:}} The cut-off vector $\ph u + \nb \eta$ is a fixed point of 
\eqref{A11} in the class $L^s L^q$. The proof is similar to Lemma
\ref{th3.2}.

\medskip
{\it{Step 3:}} 
We show that $\Phi ( \tilde{\varphi} u \otimes v)\in
L^{s}L^{q+\delta}$ provided $v \in L^s L^{q +
  \delta}$, for either $\de=0$ or $0<\de \ll 1$. Let $r=( q^2 +
       q \delta) / ( 2 q + \delta)$ so that $1/r=1/q+1/(q+\de)$.
By Young's inequality and
(\ref{estimate-T}),
 \begin{align}
\nonumber
       \| \Phi ( \tilde{\varphi} u \otimes v) (t)\|_{L^{q + \delta}} & \leq 
       \int_0^t \| \partial_k S_{i j} ( s) \|_{L^{q'}} \| ( \tilde{\varphi} u \otimes v)_{j k} ( t - s) \|_{L^{r}} d s\\
       & \leq  \int_0^t s^{3 /(2q') - 2}
       \| ( \tilde{\varphi} u \otimes v) ( t - s) \|_{L^r} d s.
  \end{align}
By generalized Young's inequality,
\[ 
\label{a2v2}
\| \Phi ( \tilde{\varphi} u \otimes v) \|_{L^s L^{q + \delta}} \lec
\| s^{3 / (2 q') - 2} \|_{L^{s', \infty}}  \| 
     \tilde{\varphi} u \otimes v\|_{L^{s / 2} L^{r}} \lec \norm{u}_{L^sL^q}
\norm{v}_{L^s L^{q+\de}}.
\]

\medskip  
 {\it Step 4:} Existence in $L^s(L^q\cap L^{q+\de})$ and uniqueness in $L^sL^q$.
They are proved in the same way as Lemmas \ref{th3.3} and \ref{th3.4}.

The above shows that $u$ is locally in $L^sL^{q+\de}$ for some $\de>0$.
By Theorem \ref{thm6.1}, $u$ is locally bounded.
\end{proof}

\section*{Acknowledgments}
Part of this work was done when Tsai visited the Center of Advanced Study 
in Theoretical Sciences (CASTS) at the National Taiwan University. He would like to
thank his hosts for their warm hospitality. The research of Luo and Tsai 
is supported in part by NSERC grant 261356-13 (Canada).

\bibliographystyle{habbrv}
\bibliography{wl3reg}

Yuwen Luo

Department of Mathematics, University of British Columbia,

Vancouver, BC, Canada

ywluo@math.ubc.ca\\

Tai-Peng Tsai

Department of Mathematics, University of British Columbia,

Vancouver, BC, Canada 

ttsai@math.ubc.ca

\end{document}